\newtheorem{theorem}{Theorem}[section]
\newtheorem{definition}[theorem]{Definition}
\newtheorem{lemma}[theorem]{Lemma}
\newtheorem{corollary}[theorem]{Corollary}
\def\LEE{\mathop{\rm LEE }\nolimits}
\def\EE{\mathop{\rm EE }\nolimits}
\def\E{\mathop{\rm E }\nolimits}
\def\LE{\mathop{\rm LE }\nolimits}
\def\SLEE{\mathop{\rm SLEE }\nolimits}
\def\Tr{\mathop{\rm Tr }\nolimits}
\def\tr{\mathop{\rm tr }\nolimits}
\def\Res{\mathop{\rm Res }\nolimits}
\def\Spec{\mathop{\rm Spec }\nolimits}
\begin{document}
\textwidth 150mm \textheight 225mm
\title{Signless Laplacian Estrada index and Laplacian Estrada index of uniform hypergraphs
\thanks{Supported by the National Natural Science Foundation of China (No. 11871398) and China Scholarship Council (No. 202006290084). Email: cxduanmath@163.com; Edwin.vanDam@uvt.nl; lgwangmath@163.com}}
\author{{Cunxiang Duan$^{a,b}$, Edwin R. van Dam$^{b}$
,
Ligong Wang$^{a}$ }\\
{\small $^{a}$School of Mathematics and Statistics, Northwestern
Polytechnical University, Xi'an, P.R. China}\\
{\small $^{b}$Department of Econometrics and O.R., Tilburg University, the Netherlands}} 
\maketitle
\begin{center}
\begin{minipage}{120mm}
\vskip 0.3cm
\begin{center}
{\small {\bf Abstract}}
\end{center}
{\small
We generalize the notions of Laplacian and signless Laplacian Estrada index to uniform hypergraphs. For an $r$-uniform hypergraph $H,$ we derive an order $r+1$ trace formula of the (signless) Laplacian tensor of $H.$ Among others by using this trace formula, we obtain lower bounds for the signless Laplacian Estrada index and upper bounds for the Laplacian Estrada index. Moreover, we establish a bound involving both the Laplacian Estrada index and Laplacian energy of a uniform hypergraph.

\vskip 0.1in \noindent {\bf Key Words}: \  hypergraphs, Laplacian eigenvalue, signless Laplacian eigenvalue, trace, Estrada index, Laplacian energy  \vskip
0.1in \noindent {\bf AMS Subject Classification (2020)}: \  05C65, 05C09, 05C50.}
\end{minipage}
\end{center}

\section{Introduction }
\label{sec:ch6-introduction}

Let $G$ be a graph with $n$ vertices and let $\lambda_{i}$ be the $i$-th eigenvalue of its adjacency matrix, for $1 \leq i \leq n.$ In 2000, Estrada \cite{E} proposed the Estrada index $\EE(G)$ of $G$, that is,
$$\EE(G)=\sum\limits_{i=1}^{n}e^{\lambda_{i}}.$$
Since the Estrada index of graphs has wide applications in biology, chemistry, physics, etc., the study of the Estrada index has attracted extensive attention.
Recall that the energy of a graph $G$ is defined as
$$\E(G)=\sum\limits_{i=1}^{n}\lvert \lambda_{i}\rvert.$$ In 2007, Pe\~{n}a, Gutman and Rada \cite{PJR} obtained upper and lower bounds on the Estrada index in terms of the number of vertices and edges, and some inequalities between $\EE(G)$ and the energy $\E(G)$ of $G.$ In 2008, Gutman \cite{G} obtained some new lower bounds on $\EE(G)$ of the graph $G$ in terms of the number of vertices and edges. In 2008, Zhou \cite{Z} also established some lower and upper bounds on $\EE(G)$ in terms of graph invariants. For more results, see \cite{CFD,D,ZZL}.

Let $G$ be a graph with $n$ vertices
and let $\mu_{i}$ be the $i$-th eigenvalue of its Laplacian matrix, for $1 \leq i \leq n.$ In 2009, Fath-Tabar, Ashrafi and Gutman \cite{FAG} defined the Laplacian Estrada index of $G$ as
$$\LEE_{1}(G)=\sum\limits_{i=1}^{n}e^{\mu_{i}}.$$
Independently, Li, Shiu and Chang \cite{LSC} defined the Laplacian Estrada index of $G$ as $$\LEE(G)=\sum\limits_{i=1}^{n}e^{\mu_{i}-\frac{2m}{n}},$$ where $m$ is the number of edges of $G$.
They also presented some bounds on $\LEE(G)$ and some relations between $\LEE(G)$ and $\LE(G),$ where $\LE(G)=\sum\limits_{i=1}^{n}\lvert \mu_{i}-\frac{2m}{n}\rvert$ is the Laplacian energy of $G.$
Note that $2m/n$ is the average of the Laplacian eigenvalues of $G$, which motivates the shift by $-2m/n$ in both $\LEE(G)$ and $\LE(G).$
However, $\LEE(G)=e^{-\frac{2m}{n}}\LEE_{1}(G),$ and therefore the two ``Laplacian Estrada indices" are in some sense equivalent. Still, the results communicated in \cite{LSC} are not equivalent to those in earlier works \cite{FAG,Z1,ZG}.

Let $G$ be a graph with $n$ vertices
and let $q_{i}$ be the $i$-th eigenvalue of its signless Laplacian matrix, for $1 \leq i \leq n.$ In 2011,  Ayyaswamy et al. \cite{ABVG} defined the signless Laplacian Estrada index of $G$ as $$\SLEE(G)=\sum\limits_{i=1}^{n}e^{q_{i}},$$
and established lower and upper bounds on $\SLEE(G)$ in terms of the number of vertices and number of edges.

For several other results on the mentioned Estrada indices, we refer to \cite{CH, DZ,DL,K}.

Because of the many results relating eigenvalues of graphs and structural properties, it is also natural to consider eigenvalues of uniform hypergraphs. For some general results about the spectrum of hypergraphs, see \cite{BFWZ,DW,DWL,Qi2,SQH}.

In 2021, Sun, Zhou and Bu \cite{SZB} generalized the definition of the Estrada index to uniform hypergraphs and obtained some bounds for it. Moreover, they studied the Estrada indices of some specific $r$-uniform hypergraphs.

In this paper, motivated by \cite{LSC,SZB}, we introduce the signless Laplacian Estrada index $\SLEE(H)$ and the Laplacian Estrada index $\LEE(H)$ of an $r$-uniform hypergraph $H.$ We also obtain an order $r+1$ trace formula of the (signless) Laplacian tensor of $H.$ Further, we establish some bounds on $\SLEE(H)$ and $\LEE(H).$ In general, the problem of finding good bounds is hindered by the fact that eigenvalues of hypergraphs may be nonreal.

This paper is organized as follows. In Section 2, we will give some basic definitions of eigenvalues of hypergraphs and preliminary results that we will use in the following sections. In Section 3, we will introduce some definitions and formulas of traces of tensors. Our main result there is a trace formula of the (signless) Laplacian tensor of an $r$-uniform hypergraph. In Section 4, we mainly obtain some lower bounds on the signless Laplacian Estrada index of uniform hypergraphs. In Section 5, we give some upper bounds on the Laplacian Estrada index of uniform hypergraphs in terms of the number of vertices, the number of edges and degree sequences. We also obtain a bound involving both the Laplacian Estrada index and Laplacian energy. Our results generalize and improve some results for graphs.

\section{Preliminaries}
\label{sec:ch-sufficient}
\subsection{Eigenvalues of hypergraphs}

A hypergraph $H$ is a pair $(V,E)$, where $E$ is a (multi-)set of subsets of $V$.
The elements of $V$ are called vertices, and the elements of $E$ are called edges. A hypergraph $H$ is said to be $r$-uniform for an integer $r\geq 2$ if each edge $e\in E$ contains precisely $r$ vertices.  An $r$-uniform hypergraph is simple if there are no repeated edges. Thus, a simple 2-uniform
hypergraph is a simple graph.
All uniform hypergraphs in this paper are simple, unless otherwise specified.

Denote by $[n]$ the set $\{1,2, \ldots,n\}.$ Let $r \geq 2$ and $r$ be even. An $r$-uniform hypergraph $H$ with $V(H)=\{v_{1}, v_{2}, \ldots, v_{n}\}$ is called odd-colorable if there exists a map $\phi : [n] \rightarrow [r]$ such that for any edge $\{v_{j_{1}}, v_{j_{2}}, \ldots , v_{j_{r}}\}$ of $H,$ we have $\phi(j_{1}) + \cdots +\phi(j_{r}) \equiv \frac{r}{2}~(\mod~ r).$ For $r=2$, odd-colorability is equivalent to bipartiteness.

In 2005, tensor eigenvalues and spectra of tensors were independently introduced by Qi \cite{Q} and Lim \cite{L}. Denote by $\mathbb{C}$ the complex field. An $r$-th order $n$-dimensional tensor $\mathcal{T}=(t_{i_{1}i_{2}\ldots i_{r}})$ is a multidimensional array (or hypermatrix), where $t_{i_{1}i_{2}\ldots i_{r}}\in \mathbb{C}$ with $ 1\leq i_{1},i_{2},\ldots,i_{r}\leq n.$ For a vector $x =(x_{1}, x_{2}, \ldots, x_{n})^{\top} \in \mathbb{C}^{n}$, $\mathcal{T} x^{r-1}$ is defined as a vector in $\mathbb{C}^{n}$ with $i$-th component being
$$(\mathcal{T}x^{r-1})_{i}=\sum\limits^{n}_{i_{2},\ldots,i_{r}=1}t_{ii_{2}\ldots i_{r}}x_{i_{2}}\ldots x_{i_{r}},~~ \mbox{for}~ i\in[n].$$

Let $x^{[r-1]}=(x_{1}^{r-1}, x_{2}^{r-1}, \ldots, x_{n}^{r-1})^{\top}\in \mathbb{C}^{n}$. A number $\lambda\in \mathbb{C}$ is called an eigenvalue of the tensor $\mathcal{T}$ if there exists a nonzero vector $x\in \mathbb{C}^{n}$ such that $$\mathcal{T} x^{r-1}=\lambda x^{[r-1]};$$ in this case, $x$ is called an eigenvector of $\mathcal{T}$ corresponding to the eigenvalue $\lambda.$ The spectral radius of the tensor $\mathcal{T}$ is defined as $\rho(\mathcal{T})=\max\{|\lambda|~|~\lambda$ is an eigenvalue of $\mathcal{T}\}$. We note that the spectral radius of a nonnegative tensor is an eigenvalue \cite{Qi2}. We use $\Spec(\mathcal{T})$ to denote the set of all eigenvalues of $\mathcal{T}.$

For an $r$-th order $n$-dimensional tensor $\mathcal{T},$ the characteristic polynomial $\phi_{\mathcal{T}}(\lambda)$ of the tensor $\mathcal{T}$ is defined as the resultant $\Res(\lambda x^{[r-1]}-\mathcal{A} x^{r-1}).$

In this paper, from now on, we only consider real symmetric tensors.

\noindent\begin{definition}\label{de:c1} \cite{CoDu}
Let $H = (V, E)$ be an $r$-uniform hypergraph with $n$ vertices. The adjacency tensor of $H$ is defined as an $r$-th order $n$-dimensional tensor $\mathcal{A}_{H}=(a_{i_{1}i_{2}\ldots i_{r}}),$ where $$a_{i_{1}i_{2}\ldots i_{r}}=\left\{
\begin{array}{ll}
\frac{1}{(r-1)!},& \mbox {if}   ~\{i_{1},i_{2},\ldots, i_{r}\} \in E,
\\
0,& \mbox {otherwise}.
\end{array}
\right.$$
\end{definition}

The degree of a vertex $v_{i}$ of $H$, denoted by $d_{i},$ is the number of edges containing $v_{i}.$ Let $\mathcal{D}_{H}$ be an $r$-th order $n$-dimensional diagonal tensor with its diagonal element $d_{ii\ldots i}$ being $d_{i}$, the degree of $v_{i}$, for $i\in [n].$ Denote by $\mathcal{L}_{H}=\mathcal{D}_{H}-\mathcal{A}_{H}$ and $\mathcal{Q}_{H}=\mathcal{D}_{H}+\mathcal{A}_{H}$ the Laplacian tensor and the signless Laplacian tensor of the hypergraph $H$, respectively. The eigenvalues and (signless) Laplacian eigenvalues of $H$ refer to the eigenvalues of its adjacency tensor and its (signless) Laplacian tensor. We let $s=n(r-1)^{n-1}$ and use $q_{i}$ and $\mu_{i}$, for $i\in [s]$, to denote the signless Laplacian eigenvalues and the Laplacian eigenvalues of $H,$ respectively. We note that eigenvalues of symmetric tensors such as $\mathcal{A}_{H}, \mathcal{L}_{H},$ and $\mathcal{Q}_{H}$, may be nonreal, contrary to eigenvalues of graphs (the case $r=2$). If an eigenvalue is indeed nonreal, then also its conjugate is an eigenvalue. Moreover, the (signless) Laplacian eigenvalues lie in the disk with center and radius $\Delta$, the maximum degree of $H$ \cite{Qi2}, and thus they have a nonnegative real part.

\subsection{Some basic lemmas}

In this section, we provide some elementary lemmas, starting with some inequalities for sequences.
For an $r$-uniform hypergraph with degree sequence $d_{1}, d_{2}, \ldots, d_{n},$ we use $M$ to denote the first Zagreb index, that is, $M=\sum\limits_{i=1}^{n}d_{i}^{2}.$ The number of edges $m$ satisfies $m=\frac{1}{r}\sum\limits_{i=1}^{n}d_{i}$.

\noindent\begin{lemma}\label{le:2-1}
Let $H$ be an $r$-uniform hypergraph with degree sequence $d_{1}\geq d_{2}\geq \cdots\geq d_{n}$ and $m$ edges. Let $n\geq 3.$ Then
\begin{align*}
\tfrac{r^{2}m^{2}}{n} \leq M\leq r^{2}m^{2}-n(n-1)d_{n}^{2},
\end{align*}
with equalities holding if and only if $H$ is regular.
\end{lemma}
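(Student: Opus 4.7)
The plan is to derive both bounds directly from the identity $\sum_{i=1}^{n} d_i = rm$, which follows from counting incidences (each of the $m$ edges contributes $r$ to $\sum d_i$). Squaring this gives
\begin{equation*}
r^{2}m^{2} \;=\; \Bigl(\sum_{i=1}^{n} d_i\Bigr)^{2} \;=\; \sum_{i=1}^{n} d_i^{2} + 2\sum_{1\le i<j\le n} d_i d_j \;=\; M + 2\sum_{i<j} d_i d_j,
\end{equation*}
so both bounds reduce to controlling the cross-term $\sum_{i<j} d_i d_j$.

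For the lower bound, I would apply the Cauchy--Schwarz inequality in the form $(\sum_i d_i)^{2} \le n \sum_i d_i^{2}$, which immediately yields $nM \ge r^{2}m^{2}$, i.e., $M \ge r^{2}m^{2}/n$. Equality in Cauchy--Schwarz occurs precisely when all $d_i$ are equal, that is, when $H$ is regular.

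For the upper bound, I would use the fact that $d_i \ge d_n$ for every $i$, hence $d_i d_j \ge d_n^{2}$ for every pair $i<j$. Since there are $\binom{n}{2}$ such pairs, $2\sum_{i<j} d_i d_j \ge n(n-1)d_n^{2}$. Substituting into the identity above gives $M = r^{2}m^{2} - 2\sum_{i<j} d_i d_j \le r^{2}m^{2} - n(n-1)d_n^{2}$.

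The only mildly delicate point, and the one I would be most careful about, is the equality case for the upper bound: we need $d_i d_j = d_n^{2}$ for all $i<j$. If $d_n>0$, then $d_i, d_j \ge d_n$ forces $d_i = d_j = d_n$ for every pair, so $H$ is regular. If $d_n = 0$, then $d_i d_j = 0$ for every pair, and the assumption $n \ge 3$ is used: any vertex of positive degree lies in an edge containing at least one other vertex, which would also have positive degree, producing a pair with $d_i d_j > 0$ and contradicting $d_n = 0$ unless all degrees vanish. Hence all $d_i = 0$, which is the regular (edgeless) case. This is where the hypothesis $n \ge 3$ enters in an essential way, and it is the only step that is not a one-line manipulation.
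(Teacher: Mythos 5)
Your proof is correct and follows essentially the same route as the paper: Cauchy--Schwarz for the lower bound, and the expansion $r^2m^2 = M + 2\sum_{i<j}d_id_j$ together with $d_id_j \ge d_n^2$ for the upper bound. Your discussion of the equality case (in particular the degenerate situation $d_n=0$) is in fact more careful than the paper's one-line statement, though the hypothesis $n\ge 3$ is not really what that step needs --- $n\ge r\ge 2$ already suffices to produce two vertices of positive degree from any edge.
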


\begin{proof}
By Cauchy's inequality, we have $$M=\sum_{i=1}^{n}d_{i}^{2}\geq \tfrac{1}{n}(\sum\limits_{i=1}^{n}d_{i})^{2}=\tfrac{r^{2}m^{2}}{n},$$ with equality if and only if $d_{i}$ is constant.

For the second inequality, note that
$$M=\sum_{i=1}^{n}d_{i}^{2}=(\sum_{i=1}^{n}d_{i})^{2}-2\sum\limits_{i<j}d_{i}d_{j}\leq r^{2}m^{2}-n(n-1)d_{n}^{2},$$
with equality if and only if $d_{i}=d_j=d_n$ for all $i<j$.
\end{proof}

\noindent\begin{lemma}\label{le:2-2}
Let $H$ be an $r$-uniform hypergraph with degree sequence $d_{1}\geq d_{2}\geq \cdots\geq d_{n}$ and $m$ edges. Then
\begin{align*}
M\leq (d_{1}+d_{n})rm-nd_{1}d_{n},
\end{align*}
with equality if and only if $d_{i}=d_{1}$ or $d_{i}=d_{n}$ for $i=2,3, \ldots,n-1.$
\end{lemma}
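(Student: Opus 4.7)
The plan is to use the classical Pólya--Szegő style trick: since $d_n \leq d_i \leq d_1$ for every $i \in [n]$, the product $(d_1 - d_i)(d_i - d_n)$ is nonnegative. Expanding this product yields the pointwise bound
\begin{equation*}
d_i^{2} \leq (d_1 + d_n)\, d_i - d_1 d_n,
\end{equation*}
which is the core inequality.

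Next, I would sum this inequality over $i = 1, 2, \ldots, n$. On the left-hand side we recover $M = \sum_{i=1}^{n} d_i^{2}$. On the right-hand side we obtain $(d_1 + d_n)\sum_{i=1}^{n} d_i - n d_1 d_n$, and the identity $\sum_{i=1}^{n} d_i = rm$ (recorded just before Lemma~\ref{le:2-1}) converts this into exactly $(d_1 + d_n) r m - n d_1 d_n$, which is the desired bound.

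For the equality case, the summed inequality is tight iff each term $(d_1 - d_i)(d_i - d_n) = 0$, i.e., iff $d_i \in \{d_1, d_n\}$ for every $i$. The indices $i = 1$ and $i = n$ trivially satisfy this, so the condition reduces to $d_i = d_1$ or $d_i = d_n$ for $i = 2, 3, \ldots, n-1$, matching the statement.

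I do not anticipate a real obstacle here: the argument is essentially a one-line application of the Pólya--Szegő lemma combined with the handshake-type identity $\sum d_i = rm$, and the equality analysis is direct from the nonnegativity of each summand. The only small point to watch is that one uses $d_1$ and $d_n$ as the maximum and minimum (so the ordering hypothesis is essential), and that the two identities $\sum d_i = rm$ and $\sum d_i^{2} = M$ are the only facts about the hypergraph structure needed.
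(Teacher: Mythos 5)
Your proposal is correct and matches the paper's proof essentially verbatim: both rest on the pointwise inequality $(d_{1}-d_{i})(d_{i}-d_{n})\geq 0$ (the paper writes it as $\sum_{i}(d_{i}-d_{1})(d_{i}-d_{n})\leq 0$), summed and combined with $\sum_{i}d_{i}=rm$, with the same equality analysis. No issues.
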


\begin{proof}  From the inequality $$\sum_{i=1}^{n}(d_{i}-d_{1})(d_{i}-d_{n})\leq 0$$
it follows that $$\sum_{i=1}^{n}d_{i}^{2}\leq (d_{1}+d_{n})\sum_{i=1}^{n}d_{i}-nd_{1}d_{n}=(d_{1}+d_{n})rm-nd_{1}d_{n},$$
with equality if and only if $d_{i}=d_{1}$ or $d_{i}=d_{n}$ for $i=2,3, \ldots,n-1.$
\end{proof}

\noindent\begin{lemma}\label{le:2-3}
 Let $H$ be an $r$-uniform hypergraph with degree sequence $d_{1}, d_{2}, \ldots,d_{n},$ and let $k\geq 2$ be a positive integer. Then $$n(\tfrac{1}{n}\sum_{i=1}^{n}d_{i})^{k}\leq \sum_{i=1}^{n}d_{i}^{k}\leq %(\sum_{i=1}^{n}d_{i}^{2})^{\frac{k}{2}}=
 M^{\frac{k}{2}}.$$
\end{lemma}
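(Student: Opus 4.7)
The statement is a pair of bounds on the $k$-th power sum of a sequence of nonnegative numbers (the degrees), so the plan is to prove each inequality independently using standard convexity / monotonicity arguments. No specific combinatorial structure of $H$ is needed — the only thing we use from hypergraph theory is that every $d_i$ is nonnegative — so I expect the proof to be short and entirely analytic. There is no real obstacle; the main thing is to choose the right reformulation for each side.

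For the left inequality, the plan is to apply Jensen's inequality to the convex function $f(x)=x^{k}$ on $[0,\infty)$, which is convex precisely because $k\geq 2$. This gives
\[
\frac{1}{n}\sum_{i=1}^{n}d_{i}^{k} \;=\; \frac{1}{n}\sum_{i=1}^{n}f(d_{i}) \;\geq\; f\!\left(\frac{1}{n}\sum_{i=1}^{n}d_{i}\right) \;=\; \left(\frac{1}{n}\sum_{i=1}^{n}d_{i}\right)^{k},
\]
and multiplying through by $n$ yields the desired lower bound. (Equivalently, this is the power-mean inequality $M_{k}\geq M_{1}$.)

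For the right inequality, the trick is to rewrite $\sum d_{i}^{k} = \sum (d_{i}^{2})^{k/2}$ and then apply the elementary fact that $\sum_{i} a_{i}^{p} \leq \bigl(\sum_{i} a_{i}\bigr)^{p}$ for every $p\geq 1$ and nonnegative reals $a_{i}$. Setting $a_{i}=d_{i}^{2}\geq 0$ and $p=k/2\geq 1$ gives exactly $\sum d_{i}^{k}\leq M^{k/2}$. The elementary fact itself is proved in one line: if $S=\sum a_{i}>0$, then each $a_{i}/S\in[0,1]$, so $(a_{i}/S)^{p}\leq a_{i}/S$ since $p\geq 1$; summing over $i$ gives $\sum (a_{i}/S)^{p}\leq 1$, i.e.\ $\sum a_{i}^{p}\leq S^{p}$. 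The case $S=0$ is trivial.

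The hardest part, if anything, is merely bookkeeping of the exponent $k/2$ (which need not be an integer) and remembering that both inequalities require $d_{i}\geq 0$, which holds trivially. No reference to the trace formula or any earlier lemma is needed.
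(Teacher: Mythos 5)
Your proof is correct and follows essentially the same route as the paper: the left inequality is the power-mean bound (you invoke Jensen where the paper invokes H\"older, which are interchangeable here), and the right inequality uses exactly the paper's rewriting $\sum d_i^k=\sum(d_i^2)^{k/2}\leq(\sum d_i^2)^{k/2}$, for which you additionally supply the one-line justification the paper leaves as ``clear.''
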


\begin{proof}  The first inequality follows from H\"{o}lder's inequality. Moreover, it is clear that $$\sum_{i=1}^{n}d_{i}^{k}=\sum_{i=1}^{n}(d_{i}^{2})^{\frac{k}{2}}\leq (\sum_{i=1}^{n}d_{i}^{2})^{\frac{k}{2}}=M^{\frac{k}{2}}.$$
\end{proof}

\noindent\begin{lemma}\label{le:2-5}\cite{YQSO}
 Let $H$ be a non-empty $r$-uniform hypergraph. Then $Spec(\mathcal{L}_{H}) = Spec(\mathcal{Q}_{H})$ if and only if $r$ is even and $H$ is odd-colorable.
\end{lemma}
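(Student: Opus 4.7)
The plan is to handle the two implications separately: the ``if'' direction is constructive via a diagonal gauge transformation, whereas the ``only if'' direction is extracted from a Perron--Frobenius comparison together with a phase analysis. I expect the latter to be the main obstacle.

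For the ``if'' direction, I would assume $r$ is even and let $\phi:[n]\to[r]$ be an odd-coloring. Set $\omega=e^{2\pi\sqrt{-1}/r}$, so that $\omega^{r}=1$ and $\omega^{r/2}=-1$. Introduce the diagonal gauge transformation $y_i=\omega^{\phi(i)}x_i$. For every edge $e=\{i,i_2,\ldots,i_r\}\ni i$, odd-colorability gives $\sum_{j\in e\setminus\{i\}}\phi(j)\equiv r/2-\phi(i)\pmod{r}$, hence
\[
\prod_{j\in e\setminus\{i\}} y_j \;=\; \omega^{\,r/2-\phi(i)}\prod_{j\in e\setminus\{i\}} x_j \;=\; -\,\omega^{-\phi(i)}\prod_{j\in e\setminus\{i\}} x_j,
\]
while $y_i^{r-1}=\omega^{(r-1)\phi(i)}x_i^{r-1}=\omega^{-\phi(i)}x_i^{r-1}$. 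Substituting into $(\mathcal{L}_H y^{r-1})_i$ yields $\omega^{-\phi(i)}(\mathcal{Q}_H x^{r-1})_i$, so the eigenequation $(\mathcal{L}_H y^{r-1})_i=\lambda y_i^{r-1}$ is equivalent to $(\mathcal{Q}_H x^{r-1})_i=\lambda x_i^{r-1}$. Since $x\mapsto y$ is invertible, $\Spec(\mathcal{Q}_H)=\Spec(\mathcal{L}_H)$ follows.

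For the ``only if'' direction, I would argue contrapositively by a Perron--Frobenius comparison. Since $\mathcal{Q}_H$ is a nonnegative tensor, $\rho(\mathcal{Q}_H)$ is an eigenvalue with a nonnegative Perron eigenvector $u$. Applying the triangle inequality componentwise to any $\mathcal{L}_H$-eigenequation gives $\rho(\mathcal{L}_H)\le\rho(\mathcal{Q}_H)$. Assuming $\Spec(\mathcal{L}_H)=\Spec(\mathcal{Q}_H)$, pick $y$ with $\mathcal{L}_H y^{r-1}=\rho(\mathcal{Q}_H)y^{[r-1]}$; weak irreducibility of $\mathcal{Q}_H$ forces $|y_i|$ proportional to $u_i$, and writing $y_i=u_i e^{\sqrt{-1}\theta_i}$, the equality case of the triangle inequality at each vertex $i$ of an edge $e$ forces
\[
\sum_{j\in e\setminus\{i\}}\theta_j \;\equiv\; (r-1)\theta_i+\pi \pmod{2\pi}.
\]
Summing this identity over $i\in e$ yields $0\equiv r\pi\pmod{2\pi}$, which is consistent only when $r$ is even, while comparing the identity for different $i,i'\in e$ forces $r(\theta_i-\theta_{i'})\equiv 0\pmod{2\pi}$. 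Connectedness of $H$ then propagates these constraints to $\theta_i=\theta_0+\tfrac{2\pi}{r}\phi(i)$ for an integer map $\phi:[n]\to\mathbb{Z}/r\mathbb{Z}$, and back-substitution recovers the odd-coloring congruence $\sum_{j\in e}\phi(j)\equiv r/2\pmod{r}$.

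The main obstacle is handling the case when $\mathcal{Q}_H$ fails to be weakly irreducible, i.e., $H$ is disconnected: the phase analysis above must be carried out on each component bearing an edge, and the resulting local colorings patched into a single global $\phi$ (edge-free components being colored arbitrarily). One also needs to carefully invoke the equality case of the nonnegative-tensor Perron--Frobenius theorem, which is more delicate than its matrix counterpart and relies on the Perron eigenvector being the unique (up to scaling) positive eigenvector in the weakly irreducible setting.
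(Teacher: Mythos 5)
The paper does not actually prove this lemma---it is imported from \cite{YQSO} without argument---so there is no internal proof to compare against; your sketch is essentially the standard argument behind the cited result. The ``if'' direction is correct and complete in outline: your gauge transformation is precisely the diagonal similarity $D^{-(r-1)}\mathcal{Q}_{H}D=\mathcal{L}_{H}$ with $D=\mathrm{diag}(\omega^{\phi(i)})$, which preserves even the characteristic polynomial, not just the eigenvalue set. The connected case of the ``only if'' direction is also sound, provided you cite the precise equality form of Perron--Frobenius for weakly irreducible nonnegative tensors (if $\mathcal{Q}_{H}\lvert y\rvert^{r-1}\geq\rho(\mathcal{Q}_{H})\lvert y\rvert^{[r-1]}$ componentwise with $\lvert y\rvert\neq 0$, then $\lvert y\rvert>0$ and equality holds throughout); your phase bookkeeping (summing over $i\in e$ to get $r\pi\equiv 0\pmod{2\pi}$, differencing to get $r(\theta_i-\theta_{i'})\equiv 0\pmod{2\pi}$, and back-substituting to recover $\sum_{j\in e}\phi(j)\equiv r/2\pmod{r}$) checks out.

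The genuine gap is the disconnected case, which you flag but whose resolution is not the routine patching you describe. Your phase analysis requires, for each component $H_k$ containing an edge, an eigenvector of $\mathcal{L}_{H_k}$ itself with eigenvalue $\rho(\mathcal{Q}_{H_k})$. The hypothesis $\Spec(\mathcal{L}_{H})=\Spec(\mathcal{Q}_{H})$ only yields $\rho(\mathcal{Q}_{H_k})\in\bigcup_{j}\Spec(\mathcal{L}_{H_j})$, and with the set (not multiset) definition of $\Spec$ used in this paper there is no a priori reason this eigenvalue must be realized on the component $H_k$ rather than absorbed into the spectrum of another component. As written, your argument only certifies odd-colorability of the component(s) attaining the global maximum $\rho(\mathcal{Q}_{H})$: for such a component the eigenvector $y$ has nonzero restriction to some component $H_k$, and the chain $\rho(\mathcal{Q}_{H})\leq\rho(\mathcal{L}_{H_k})\leq\rho(\mathcal{Q}_{H_k})\leq\rho(\mathcal{Q}_{H})$ closes the loop there, but it says nothing about components of smaller spectral radius. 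You would need either to add a connectivity hypothesis to the statement or to supply a separate argument that the spectral equality localizes to every edge-bearing component.
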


\noindent\begin{lemma}\label{le:2-6}
 Let $f_{r}(x)=e^{x}-\sum\limits_{k=0}^{r}\tfrac{1}{k!}x^{k}.$ If $r$ is odd, then $f_{r}(x)\geq f_{r}(0)=0$ for all $x\in \mathbb{R}.$
\end{lemma}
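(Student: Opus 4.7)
The plan is to prove the inequality by induction on odd $r\geq 1$, exploiting the clean relation $f_r'(x)=f_{r-1}(x)$ that follows from termwise differentiation of the partial Taylor sum. Note first that $f_r(0)=1-1=0$ regardless of $r$, so the whole question reduces to showing that $x=0$ is the global minimum of $f_r$.

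For the base case $r=1$, we have $f_1(x)=e^x-1-x$ and $f_1'(x)=e^x-1$, which is strictly negative on $(-\infty,0)$ and strictly positive on $(0,\infty)$. Thus $f_1$ is decreasing then increasing, so $x=0$ is the global minimum and $f_1(x)\geq 0$ on $\mathbb{R}$.

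For the inductive step, suppose the statement holds for some odd $r-2\geq 1$, i.e.\ $f_{r-2}(x)\geq 0$ for all $x\in\mathbb{R}$. Since $f_{r-1}'(x)=f_{r-2}(x)\geq 0$, the function $f_{r-1}$ is nondecreasing on $\mathbb{R}$; combined with $f_{r-1}(0)=0$, this gives
\begin{equation*}
f_{r-1}(x)\leq 0 \text{ for } x\leq 0, \qquad f_{r-1}(x)\geq 0 \text{ for } x\geq 0.
\end{equation*}
Now use $f_r'(x)=f_{r-1}(x)$: the previous line says $f_r$ is decreasing on $(-\infty,0]$ and increasing on $[0,\infty)$, so $x=0$ is its global minimum and $f_r(x)\geq f_r(0)=0$, as required.

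There is essentially no obstacle here; the only point that genuinely uses the parity hypothesis is the need to go up by steps of two in the induction, since for \emph{even} indices the sign pattern of the ``transitional'' function $f_{r-1}$ around $0$ is reversed and one no longer obtains a minimum at the origin (indeed for even $r$ the polynomial $\sum_{k=0}^r x^k/k!$ dominates $e^x$ as $x\to-\infty$, so $f_r$ is unbounded below there, which is exactly why the lemma is restricted to odd $r$).
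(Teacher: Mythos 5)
Your proof is correct: the base case $r=1$ and the two-step induction via the identity $f_r'(x)=f_{r-1}(x)$ are both carried out without gaps, and your closing observation about even $r$ matches the paper's own remark that $f_r(x)<f_r(0)$ for $x<0$ in that case. Note that the paper states this lemma without any proof (treating it as an elementary calculus fact), so there is no argument to compare against; yours is the standard one and would serve as a complete justification.
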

Note that for $r$ even, $f_{r}(x)\geq f_{r}(0)=0$ for all $x\geq 0,$ but $f_{r}(x)<f_{r}(0)$ for $x<0.$ This causes a mistake in the proof of \cite[lower bound Thm.~5]{LSC}, hence this result may not be correct.

\section{Traces of uniform hypergraphs}
\label{sec:ch-inco}

Traces of tensors are important invariants in the spectral theory of tensors. Morozov and Shakirov \cite{MS} defined the $j$-th order trace $\Tr_{j}(\mathcal{T})$ of an $r$-th order $n$-dimensional tensor $\mathcal{T}=(t_{i_{1}i_{2}\cdots i_{r}})$ as $$\Tr_{j}(\mathcal{T})=(r-1)^{n-1}\sum_{j_{1}+j_{2}+\cdots+j_{n}=j}\big[\prod_{i=1}^{n}\tfrac{1}{(j_{i}(r-1))!}\big(\sum_{l \in [n]^{r-1}}t_{il}\tfrac{\partial}{\partial x_{il}}\big)^{j_{i}}\big] \tr (X^{j(r-1)}),$$
where $X = (x_{ij})$ is an $n\times n$ auxiliary matrix, $\frac{\partial}{\partial x_{il}} = \frac{\partial}{\partial x_{il_{2}}}\frac{\partial}{\partial x_{il_{3}}}\cdots \frac{\partial}{\partial x_{il_{r}}}$ if $l = l_{2}\cdots l_{r},$ and  $j_{1}, \ldots, j_{n}$ run over all nonnegative integers with $j_{1} +\cdots+j_{n} =j.$ For any $j\in [s],$ Hu et al. \cite{HHLQ} showed that
$$\Tr_{j}(\mathcal{T})=\sum\limits_{\lambda \in \Spec(\mathcal{T})}\lambda^{j},$$
where $\Spec(\mathcal{T})$ is the spectrum of the tensor $\mathcal{T}.$

For an $r$-uniform hypergraph $H,$ Cooper and Dutle \cite{CoDu} proved that $\Tr_{j}(\mathcal{A}_{H})=0$ for $j=1,2,\ldots,r-1.$ An $r$-uniform hypergraph $H$ is called $r$-valent if the degree of every vertex of $H$ is the multiple of $r.$ A simplex in a hypergraph is a set of $r+1$ vertices such that every subset of $r$ vertices forms an edge. The simplex is the only $r$-uniform $r$-valent multihypergraph with $r+1$ edges. Cooper and Dutle \cite{CoDu} showed that $\Tr_{r+1}(\mathcal{A}_{H})=C_{r}(r+1)(r-1)^{n-r}$(\# of simplices in $H$), where $C_{r}$ is a constant depending only on $r.$ They also showed that $C_{3}=21,~C_{4}=588,~C_{5}=28230.$ For graphs, it is well known \cite{Bi} that $\Tr_{3}(\mathcal{A}_{G})=6$(\# of triangles in $G$), i.e., $C_{2}=2.$ For given (relatively small) $r$, the constant $C_{r}$ can be explicitly determined by some recent results \cite{CC}. For example, $C_{6}=2092206,~C_{7}= 220611384,~C_{8}= 31373370936.$

Zhou et al. \cite{ZSWB} obtained some trace formulas for the signless Laplacian tensor and the Laplacian tensor of uniform hypergraphs as follows.
\noindent\begin{lemma}\label{le:3-1}\cite{ZSWB}
Let $H$ be an $r$-uniform hypergraph with degree sequence $d_{1}, d_{2}, \ldots,d_{n}$ and $m$ edges. Then
$$\Tr_{j}(\mathcal{L}_{H})=\Tr_{j}(\mathcal{Q}_{H})=(r-1)^{n-1}\sum_{i=1}^{n}d_{i}^{j},~~j=1,2,\ldots,r-1,$$
$$\Tr_{r}(\mathcal{L}_{H})=(-1)^{r}r^{r-1}(r-1)^{n-r}m+(r-1)^{n-1}\sum_{i=1}^{n}d_{i}^{r},$$ and
$$\Tr_{r}(\mathcal{Q}_{H})=r^{r-1}(r-1)^{n-r}m+(r-1)^{n-1}\sum_{i=1}^{n}d_{i}^{r}.$$
\end{lemma}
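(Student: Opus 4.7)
The plan is to apply the Morozov--Shakirov trace formula directly, exploiting the sparse structure of $\mathcal{L}_{H}$ and $\mathcal{Q}_{H}$. Since the only nonzero entries of these tensors are the diagonal entries $d_{i}$ and, for each edge of $H$, the entries $\pm 1/(r-1)!$ (one for each permutation of the edge), the inner operator factorizes as
$$\sum_{l\in[n]^{r-1}}t_{il}\,\frac{\partial}{\partial x_{il}} \;=\; d_{i}\,\frac{\partial^{r-1}}{\partial x_{ii}^{r-1}} \;+\; \epsilon\!\!\sum_{\substack{e\ni v_{i}\\ e=\{v_{i},v_{j_{2}},\ldots,v_{j_{r}}\}}}\!\!\frac{\partial^{r-1}}{\partial x_{ij_{2}}\cdots\partial x_{ij_{r}}},$$
with $\epsilon=-1$ for $\mathcal{L}_{H}$ and $\epsilon=+1$ for $\mathcal{Q}_{H}$. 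I would then expand $\tr(X^{j(r-1)})=\sum_{w} x^{w}$ as a sum over closed walks $w$ of length $j(r-1)$ in the complete digraph on $[n]$ (with loops), so that applying the differential operator reduces to selecting those walks whose step multiset matches the prescribed derivatives.

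For $1\leq j\leq r-1$, I would argue that only ``pure loop'' configurations survive: a single vertex $v_{i}$ with $j_{i}=j$, the loop part $d_{i}\partial^{r-1}/\partial x_{ii}^{r-1}$ chosen in every one of the $j$ applications. Indeed, any pure-loop distribution spread over two or more vertices would require a closed walk composed only of loops at distinct vertices (impossible, since loops do not connect different vertices), and any configuration invoking an edge part at $v_{i}$ demands $r-1$ out-edges from $v_{i}$ to be flow-balanced by $r-1$ in-edges supplied elsewhere; for $j<r$ the remaining derivatives cannot provide this balance. The single-vertex loop contribution evaluates to $\frac{1}{(j(r-1))!}\,d_{i}^{j}\cdot(j(r-1))!=d_{i}^{j}$; summing over $i$ and multiplying by the prefactor $(r-1)^{n-1}$ yields $(r-1)^{n-1}\sum_{i}d_{i}^{j}$, which is the same for $\mathcal{L}_{H}$ and $\mathcal{Q}_{H}$ since no edge part was used.

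For $j=r$, the pure-loop term still contributes $(r-1)^{n-1}\sum_{i}d_{i}^{r}$. A new family of contributions now appears: for each edge $e=\{v_{i_{1}},\ldots,v_{i_{r}}\}$, take the distribution $j_{i_{1}}=\cdots=j_{i_{r}}=1$ and, in each of the $r$ applications, select the edge part corresponding to $e$. The surviving walks are then precisely the Eulerian circuits in the complete digraph $\vec{K}_{r}$ on the vertex set of $e$; by the BEST theorem their number (as ordered tuples) equals $r^{r-1}(r-1)((r-2)!)^{r}$. Combined with the prefactor $(r-1)^{n-1}$, the normalizer $((r-1)!)^{-r}$, and the sign $\epsilon^{r}$, the per-edge contribution simplifies to $\epsilon^{r}\,r^{r-1}(r-1)^{n-r}$; summing over the $m$ edges of $H$ produces the asserted $\pm r^{r-1}(r-1)^{n-r}m$ term, with sign $(-1)^{r}$ for $\mathcal{L}_{H}$ and $+1$ for $\mathcal{Q}_{H}$. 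The main obstacle is ruling out all other configurations at $j=r$---those mixing loop and edge parts, or employing edge parts corresponding to two or more distinct edges---which requires a careful flow-balance analysis on closed walks of length $r(r-1)$ together with the observation that any loop inserted at $v_{i}$ forces every other operator at $v_{i}$ to be a loop as well, and similarly that an Eulerian-type match on a single edge exhausts all the derivatives.
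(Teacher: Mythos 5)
Your argument is correct in substance, but note that the paper does not prove this lemma at all: it is quoted verbatim from Zhou, Sun, Wang and Bu \cite{ZSWB}, so there is no in-paper proof to match. The closest analogue is the paper's own proof of Theorem \ref{th:3-1}, which handles the order $r+1$ trace by the Shao--Qi--Hu reformulation (Lemma \ref{le:3-2}): one sums $\tfrac{b(F)}{c(F)}\pi_F(\mathcal{T})\lvert W(F)\rvert$ over $r$-valent index tuples $F$ and counts Eulerian closed walks via the BEST theorem. You work instead directly from the Morozov--Shakirov differential formula, but the combinatorial core is the same: a surviving configuration must have an arc multiset that is connected and in/out balanced, which for $j\le r-1$ forces pure single-vertex loop configurations (giving $(r-1)^{n-1}\sum_i d_i^j$), and for $j=r$ admits exactly one new family, the complete digraph $\vec K_r$ on an edge, whose $r^{r-1}(r-1)((r-2)!)^r$ rooted Eulerian circuits combine with the normalizer $((r-1)!)^{-r}$ and the sign $\epsilon^r$ to give $\epsilon^r r^{r-1}(r-1)^{n-r}$ per edge. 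Your constants and the BEST-theorem count check out. The one place you defer work --- excluding configurations at $j=r$ that mix loop and edge parts or use two distinct edges --- does close as you indicate: an edge part at $v_i$ forces $j_v\ge 1$ for all $r$ vertices of that edge, which with $\sum j_v=r$ pins $j_v=1$ on exactly those vertices; a loop part at any such vertex then breaks in/out balance (it receives an arc from $v_i$ it cannot return), and a second edge $e'$ would need $e'\setminus\{v\}\subseteq e\setminus\{v\}$, hence $e'=e$ since the hypergraph is simple. Writing out that short case analysis would make the proof complete; as a self-contained derivation it is a reasonable alternative to chasing the reference.
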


Let $j$ be a positive integer and $\mathcal{F}_{j}=\{(i_{1}\alpha_{1},\ldots,i_{j}\alpha_{j}) \mid 1 \leq i_{1} \leq i_{2} \leq\cdots \leq i_{j}\leq n;~\alpha_{1}, \ldots, \alpha_{j}\in [n]^{r-1}\}.$ For $F=(i_{1}\alpha_{1},\ldots,i_{j}\alpha_{j})\in \mathcal{F}_{j}$ and a tensor $\mathcal{T}=(t_{i_{1}i_{2}\cdots i_{r}}),$ let $\pi_{F}(\mathcal{T})=t_{i_{1}\alpha_{1}}\cdots t_{i_{j}\alpha_{j}}.$
\noindent\begin{definition}\label{de:3-1} \cite{SQH}
Let $F=(i_{1}\alpha_{1},\ldots,i_{j}\alpha_{j})\in \mathcal{F}_{j},$ where $i_{h}\alpha_{h}\in [n]^{r}$, $h=1,2, \ldots, j.$ Then
\\$(1)$ Let $E(F)=\bigcup\limits_{h=1}^{j}E_{h}(F),$ where $E_{h}(F)$ is the arc multi-set $E_{h}(F)=\{ (i_{h},v_{1}), \ldots, (i_{h},v_{r-1}) \}$ if $\alpha_{h}=v_{1}\cdots v_{r-1}.$
\\$(2)$ Let $b(F)$ be the product of the factorials of the multiplicities of all the arcs of $E(F)$.
\\$(3)$ Let $c(F)$ be the product of the factorials of the outdegrees of all the vertices in the arc multi-set $E(F)$.
\\$(4)$ Let $W(F)$ be the set of all Eulerian closed walks $W$ with the arc multi-set $E(F).$
\end{definition}
In most of the literature, the term closed walk is used instead of Eulerian closed walk. We prefer the term Eulerian closed walk to emphasize that all the arcs should be used in such a closed walk.

\noindent\begin{lemma}\label{le:3-2}\cite{SQH}
Let $\mathcal{T}=(t_{i_{1}i_{2}\cdots i_{r}})$ be an $r$-th order $n$-dimensional tensor. Then
$$\Tr_{j}(\mathcal{T})=(r-1)^{n-1}\sum_{F\in \mathcal{F}'_{j}}\tfrac{b(F)}{c(F)}\pi_{F}(\mathcal{T})\lvert W(F)\rvert,$$
where $\mathcal{F}'_{j}=\{F\in  \mathcal{F}_{j} \mid F$ is $r$-valent$\}.$
\end{lemma}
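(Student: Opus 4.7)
The plan is to derive this formula directly from the Morozov--Shakirov trace formula stated at the top of Section~3, by carefully unpacking how the differential operators act on the trace of $X^{j(r-1)}$. First, I would expand the matrix trace combinatorially as
$$\tr(X^{j(r-1)})=\sum x_{k_{1}k_{2}}x_{k_{2}k_{3}}\cdots x_{k_{j(r-1)}k_{1}},$$
where the sum runs over all cyclic sequences in $[n]^{j(r-1)}$; in other words, the trace enumerates closed walks of length $j(r-1)$ on the complete directed graph with vertex set $[n]$, one monomial per walk.

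Next I would expand each operator $\bigl(\sum_{l\in[n]^{r-1}}t_{il}\,\partial/\partial x_{il}\bigr)^{j_{i}}$ multinomially. A typical term is indexed by a multiset of $j_{i}$ elements $\alpha_{h}\in[n]^{r-1}$, which together with the starting vertex $i$ produce tuples $(i\alpha_{1},\ldots,i\alpha_{j_{i}})$; assembling these over all $i$ yields an element $F\in\mathcal{F}_{j}$, with weight $\pi_{F}(\mathcal{T})$ times a multinomial coefficient. The key combinatorial step is to check that, when one applies the differential operator corresponding to $F$ to the trace monomial associated with a closed walk $W$, the result vanishes unless the arcs of $W$ coincide (as a multiset) with $E(F)$, and otherwise contributes exactly $b(F)$ (the product of factorials of arc multiplicities, arising from differentiating repeated $x_{il}$-variables). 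Multiplying by the normalizing prefactor $\prod_{i}1/(j_{i}(r-1))!$ and rewriting the multinomial coefficient, the product of vertex outdegree factorials collects into $c(F)$, leaving the rational factor $b(F)/c(F)$ in front of $\pi_{F}(\mathcal{T})$. Summing over closed walks with arc multiset $E(F)$ gives precisely $|W(F)|$, and the restriction to $\mathcal{F}'_{j}$ is automatic because Eulerian closed walks exist on $E(F)$ only when $F$ is $r$-valent (in-degree equals out-degree at every vertex).

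The main obstacle is the bookkeeping in the previous paragraph. One must be meticulous about two sources of factorials that look similar but play opposite roles: the multinomial coefficient from expanding the operator $(\sum_{l}t_{il}\partial/\partial x_{il})^{j_{i}}$, and the multiplicities generated when $\partial/\partial x_{il}$ hits a variable $x_{il}$ that already appears to some power in a trace monomial. Reconciling these so that the combined weight collapses cleanly to $b(F)/c(F)$ is the delicate combinatorial core of the argument; the rest is a fairly mechanical translation between the analytic description of $\Tr_{j}(\mathcal{T})$ and the arc-multiset description of Eulerian closed walks.
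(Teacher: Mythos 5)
The paper does not prove this lemma at all---it is imported verbatim from \cite{SQH}---so there is no in-paper argument to compare against; your outline does follow the derivation used in that reference, namely expanding the Morozov--Shakirov formula term by term against the closed-walk expansion of $\tr(X^{j(r-1)})$. The strategy is sound, and your identification of the three ingredients (walk monomials, multinomial expansion of the operators, vanishing unless the differentiated arc multiset matches the walk's arc multiset) is correct.

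However, as written this is a plan rather than a proof: the entire content of the lemma is the exact constant $b(F)/c(F)\,\lvert W(F)\rvert$, and that is precisely the part you defer with ``one must check'' and ``reconciling these \ldots is the delicate core.'' Two concrete points would close the gap. First, the prefactor $\prod_{i}1/(j_{i}(r-1))!$ is already equal to $1/c(F)$, because $j_{i}(r-1)$ is exactly the outdegree of vertex $i$ in the arc multiset $E(F)$; no multinomial coefficient needs to be ``rewritten'' into $c(F)$ --- indeed, if $\mathcal{F}_{j}$ records the $\alpha_{h}$ in order (as the definition of $\mathcal{F}_{j}$ in the paper suggests), distinct orderings within a block are distinct elements of $\mathcal{F}_{j}$ and correspond bijectively to distinct terms of the operator expansion, so no grouping coefficient arises at all. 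Second, the factor $b(F)\lvert W(F)\rvert$ requires matching the number of monomial sequences $(k_{1},\ldots,k_{j(r-1)})$ in $\tr(X^{j(r-1)})$ whose arc multiset is $E(F)$ against the stated convention that Eulerian closed walks in $W(F)$ do not distinguish multiple arcs or loops (see the remark after Definition~\ref{de:3-1}); whether one counts walks with or without a distinguished starting point, and whether repeated arcs are labelled, changes the constant, and your sketch does not pin this down. Until that bookkeeping is actually carried out, the formula is asserted rather than derived. (A minor point that you do get right: $r$-valency of $F$ is only a necessary condition for $W(F)\neq\emptyset$, so restricting the sum to $\mathcal{F}'_{j}$ discards only terms with $\lvert W(F)\rvert=0$.)
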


Note that multiple arcs (or loops) are not distinguished if we determine $\lvert W(F)\rvert.$ Note also that if $F$ is not $r$-valent, then $W(F)=\emptyset$.
We now present a new result for the order $r+1$ trace of the (signless) Laplacian.

\noindent\begin{theorem}\label{th:3-1}
Let $H$ be an $r$-uniform hypergraph with degree sequence $d_{1}, d_{2}, \ldots, d_{n}$. Then
\begin{align*}
\Tr_{r+1}(\mathcal{L}_{H})&=(-1)^{r+1}\Tr_{r+1}(\mathcal{A}_{H})+(r-1)^{n-1}\sum_{i=1}^{n}d_{i}^{r+1}+(-1)^{r}(r-1)^{n-r}(r+1)r^{r-2}\sum_{i=1}^{n}d_{i}^{2},
\end{align*}
and
\begin{align*}
\Tr_{r+1}(\mathcal{Q}_{H})=\Tr_{r+1}(\mathcal{A}_{H})+(r-1)^{n-1}\sum_{i=1}^{n}d_{i}^{r+1}+(r-1)^{n-r}(r+1)r^{r-2}\sum_{i=1}^{n}d_{i}^{2}.
\end{align*}
\end{theorem}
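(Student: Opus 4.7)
The plan is to apply Lemma~\ref{le:3-2} directly to $\mathcal{L}_H=\mathcal{D}_H-\mathcal{A}_H$ and $\mathcal{Q}_H=\mathcal{D}_H+\mathcal{A}_H$ and expand $\pi_F$ multilinearly in the $r+1$ positions. Each factor $(\mathcal{L}_H)_{i_h\alpha_h}$ is either a \emph{diagonal} contribution $d_{i_h}$ (when $\alpha_h=i_h^{r-1}$) or an \emph{edge} contribution $\pm\tfrac{1}{(r-1)!}$ (when $\{i_h\}\cup\alpha_h\in E(H)$). Let $k$ denote the number of diagonal positions in $F$. I will show that only $k\in\{0,1,r+1\}$ give nonzero contributions and evaluate each case.

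The two pure cases are immediate: $k=r+1$ reduces to $\Tr_{r+1}(\mathcal{D}_H)=(r-1)^{n-1}\sum_i d_i^{r+1}$, and $k=0$ reproduces $\pm\Tr_{r+1}(\mathcal{A}_H)$ (sign $(-1)^{r+1}$ for $\mathcal{L}_H$, $+1$ for $\mathcal{Q}_H$). The central structural step is to eliminate $2\le k\le r$ via a degree-balance argument on $E(F)$. A diagonal position at $j$ contributes $r-1$ parallel loops at $j$, which are automatically balanced and cannot create connectivity between distinct vertices; an edge position with head $v$ contributes $r-1$ arcs from $v$ to the other vertices of its edge. For each vertex $v$, let $e_v$ and $f_v$ be the number of edge positions in which $v$ appears as head and as non-head, respectively. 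The Eulerian condition forces $(r-1)e_v=f_v$ for every $v$; since $f_v\le(r+1-k)-e_v$, this yields $re_v\le r+1-k$. But $\sum_v e_v=r+1-k$, so some $e_v\ge 1$, forcing $r+1-k\ge r$, i.e.\ $k\in\{0,1\}$.

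For $k=1$ the same equations give $e_v\in\{0,1\}$ and, whenever $e_v=1$, the vertex $v$ lies in every one of the $r$ edge positions; connectivity of $E(F)$ then forces all $r$ edge positions to coincide with a single edge $e$ whose vertices form a permutation of the heads, and the diagonal vertex $j$ must lie in $e$. For each pair $(j,e)$ with $j\in e$ I enumerate the associated tuples $F$: the two positions with $i_h=j$ (one diagonal, one edge with head $j$) can be interchanged, and each of the $r$ edge positions contributes $(r-1)!$ orderings of $\alpha_h$, giving $2\,[(r-1)!]^r$ tuples, each with $\pi_F(\mathcal{Q}_H)=d_j/[(r-1)!]^r$. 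The multidigraph $E(F)$ consists of the complete digraph on the $r$ vertices of $e$ together with $r-1$ loops at $j$, so $b(F)=(r-1)!$, $c(F)=(2(r-1))!\,[(r-1)!]^{r-1}$, and $|E(F)|=r^2-1$. The BEST theorem, using Cayley's count $t_w=r^{r-2}$ of in-arborescences of the complete digraph on $r$ vertices and dividing by $b(F)$ to identify the parallel loops, yields $|W(F)|=(r+1)r^{r-2}(2r-3)!\,[(r-2)!]^{r-2}$. Summing $d_j$ over pairs $(j,e)$ with $j\in e$ produces $\sum_i d_i^2$, and the remaining factorials collapse to $(r-1)^{n-r}(r+1)r^{r-2}$, giving the mixed term for $\mathcal{Q}_H$; the $\mathcal{L}_H$ identity follows by picking up an extra $(-1)^r$ from the $r$ negative edge factors.

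The main obstacle I foresee is the balance-and-connectivity reduction to $k\in\{0,1,r+1\}$ and, within $k=1$, to a single repeated edge containing $j$; once that structural step is done, the remaining work is a careful bookkeeping of the two position orderings at $i_h=j$, the $\alpha_h$-orderings, and an application of Cayley's formula via BEST to match the exact coefficient stated in the theorem.
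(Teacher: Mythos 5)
Your proposal follows essentially the same route as the paper's proof: both expand $\Tr_{r+1}$ via Lemma \ref{le:3-2}, split according to the number of diagonal positions in $F$, reduce the mixed case to one diagonal block at a vertex $j$ together with $r$ copies of a single edge containing $j$, and evaluate $b(F)$, $c(F)$ and $\lvert W(F)\rvert$ via the BEST theorem with the count $r^{r-2}$ of arborescences of the complete digraph, your expression $(r+1)r^{r-2}(2r-3)!\,[(r-2)!]^{r-2}$ for $\lvert W(F)\rvert$ being algebraically identical to the paper's $\tfrac{(r^{2}-1)(2r-3)![(r-2)!]^{r-1}r^{r-2}}{(r-1)!}$. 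The only (cosmetic) difference is that you eliminate the intermediate cases $2\le k\le r$ by an explicit in-/out-degree balance inequality, whereas the paper does so via connectivity and $r$-valency; the resulting coefficients agree.
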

Recall that $\Tr_{r+1}(\mathcal{A}_{H})=C_{r}(r+1)(r-1)^{n-r}$(\# of simplices in $H$), and $C_{r}$ is a constant depending only on $r.$

\begin{proof}  By Lemma \ref{le:3-2}, we have $$\Tr_{r+1}(\mathcal{L}_{H})=(r-1)^{n-1}\sum_{F\in \mathcal{F}'_{r+1}}\tfrac{b(F)}{c(F)}\pi_{F}(\mathcal{L}_{H})\lvert W(F)\rvert,$$
where $\mathcal{F}'_{r+1}=\{F\in  \mathcal{F}_{r+1} \mid F$ is $r$-valent$\}.$

First, we note that if $F\in \mathcal{F}'_{r+1}$ does not contain diagonal elements, then $F$ corresponds to a simplex, and these $F$ are precisely the ones that contribute to $\Tr_{r+1}(\mathcal{A}_{H}).$ The (only) difference is that $\pi_{F}(\mathcal{L}_{H})=(-1)^{r+1}\pi_{F}(\mathcal{A}_{H}),$ hence the total contribution (over all such $F$) to $\Tr_{r+1}(\mathcal{L}_{H})$ is $(-1)^{r+1}\Tr_{r+1}(\mathcal{A}_{H})$. What remains to consider are those $F\in \mathcal{F}'_{r+1}$ that do contain a diagonal element, and for which  $\pi_{F}(\mathcal{L}_{H})\neq 0$ and $\lvert W(F)\rvert\neq 0.$ We now note that if $\lvert W(F)\rvert\neq 0$, then $E(F)$ induces a connected multigraph. If $F$ contains a diagonal element, $ii \ldots i$ say, then there are two cases.

In the first case, vertex $i$ is not contained in any edge of $F$. Then $F=(ii\cdots i, \ldots, ii\cdots i)$ because the multigraph corresponding to $E(F)$ must be connected. For all such $F,$ we have that $b(F)=c(F)=((r+1)(r-1))!, ~\pi_{F}(\mathcal{L}_{H})=d_{i}^{r+1},$ and $\lvert W(F)\rvert=1,$ hence the total contribution over all vertices $i$ in such $F$ to $\Tr_{r+1}(\mathcal{L}_{H})$ equals $$(r-1)^{n-1}\sum_{i=1}^{n}d_{i}^{r+1}.$$

In the second case, vertex $i$ is contained in an edge of $F$, $\{i,i_{2},\ldots ,i_{r}\}$ say. Because $F$ is $r$-valent, all vertices $i,i_{2},\ldots ,i_{r}$ must appear in all of the remaining $r-1$ egdes of $F$. Moreover, each vertex must have outdegree at least one in $E(F)$ (otherwise there are no closed walks). So $F$ is an appropriate ordering of $\{ii\cdots i, i\alpha_{1}, i_{2}\alpha_{2}, \ldots, i_{r}\alpha_{r}\}$, where each of the $i_{h}\alpha_{h}$ represents the same edge as $i\alpha_{1}$, $2 \leq h \leq r.$ For each edge $e$ of $H$ containing $i,$ there are two orderings of the first entry, and $[(r-1)!]^{r}$ orderings of the $\alpha_{h}$ (in total). Thus, the total number of $F$ (for fixed $i$) of the given form is $2d_{i}[(r-1)!]^{r}.$ We will next show that each such $F$ has the same contribution to $\Tr_{r+1}(\mathcal{L}_{H}).$ Indeed, (only depending on $i$) first of all it follows that $E(F)$ induces a complete directed graph on $r$ vertices with $r-1$ loops added at one vertex $i.$ Thus, $b(F)=(r-1)!$ and $c(F)=(2r-2)![(r-1)!]^{r-1}.$ Moreover, $\pi_{F}(\mathcal{L}_{H})=(-1)^{r}\frac{d_{i}}{[(r-1)!]^{r}},$ so what remains is to count the number of Eulerian closed walks on $E(F)$. In order to do so, we apply the so-called BEST theorem \cite{AB} and the fact that the number of spanning trees on a complete digraph on $r$ vertices equals $r^{r-2}$. It follows that if we label the loops, then the number of Eulerian cycles on the digraph induced by $E(F)$ equals $(2r-3)![(r-2)!]^{r-1}r^{r-2}$. Because each Eulerian cycle has $r^{2}-1$ arcs (including the $r-1$ loops) and loops in $W(F)$ are not labelled, it follows that $\lvert W(F)\rvert=\frac{(r^{2}-1)(2r-3)![(r-2)!]^{r-1}r^{r-2}}{(r-1)!}.$ Thus, the total contribution of all these $F$ to $\Tr_{r+1}(\mathcal{L}_{H})$ equals
\begin{align*}
&(r-1)^{n-1}\tfrac{(r-1)!}{(2r-2)![(r-1)!]^{r-1}}\tfrac{(r^{2}-1)(2r-3)![(r-2)!]^{r-1}r^{r-2}}{(r-1)!}%\\&
\sum_{i=1}^{n}(-1)^{r}\tfrac{d_{i}}{[(r-1)!]^{r}}2d_{i}[(r-1)!]^{r}
\\&=(-1)^{r}(r-1)^{n-r}(r+1)r^{r-2}\sum_{i=1}^{n}d_{i}^{2}.
\end{align*}
By collecting all contributions, the claimed result follows.

For the signless Laplacian tensor, the only difference is in the sign of $\pi_{F}(\mathcal{Q}_{H}),$ and it follows that $\pi_{F}(\mathcal{L}_{H})=(-1)^{r+1}\pi_{F}(\mathcal{Q}_{H})$ if $F$ has no loops, and $\pi_{F}(\mathcal{L}_{H})=(-1)^{r}\pi_{F}(\mathcal{Q}_{H})$ in the above second case. The result for the signless Laplacian tensor thus follows.
\end{proof}

\section{Lower bounds on the signless Laplacian Estrada index of uniform hypergraphs}
\label{sec:ch-inco}
In this section, we define the signless Laplacian Estrada index, which generalizes the signless Laplacian Estrada index of graphs \cite{ABVG}. By using trace formulas, we then obtain a lower bound on the signless Laplacian Estrada index of a uniform hypergraph. Further, for $r$-uniform odd-colorable hypergraphs, we present a lower bound on the signless Laplacian Estrada index in terms of the number of vertices and the degree sequence.

Motivated by the proofs of the above results, we also obtain a lower bound on the Laplacian Estrada index of a (usual) graph $G$, which improves a known bound on graphs in some cases.

Let $H$ be an $r$-uniform hypergraph with $n$ vertices and $m$ edges. The signless Laplacian Estrada index is defined as $$\SLEE(H)=\sum_{i=1}^{s}e^{q_{i}}.$$

\noindent\begin{theorem}\label{th:4-1}
Let $H$ be an $r$-uniform hypergraph with $n$ vertices, $m$ edges, and degrees $d_{1}, d_{2} ,\ldots, d_{n}$. Then
\begin{align*}
\SLEE(H) \geq (r-1)^{n-1}\Big(n&+(\tfrac{r}{r-1})^{r-1}\tfrac{m}{r!}+\tfrac{1}{(r-1)^{n-1}(r+1)!}\Tr_{r+1}(\mathcal{A}_{H})
\\&+\tfrac{(r+1)r^{r-2}}{(r-1)^{r-1}(r+1)!}\sum_{i=1}^{n}d_{i}^{2}+\sum_{k=1}^{r+1}\tfrac{1}{k!}\sum_{i=1}^{n}d_{i}^{k}\Big).
\end{align*}
Equality is attained if and only if $H$ is an empty hypergraph.
\end{theorem}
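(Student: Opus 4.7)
The plan is to expand each $e^{q_i}$ as a Taylor series and reduce the inequality to a statement about the tail of the trace series. Since each eigenvalue satisfies $e^{q_{i}}=\sum_{k\ge 0}q_{i}^{k}/k!$ absolutely and the spectrum of $\mathcal{Q}_{H}$ is finite, the two summations may be interchanged to give
\begin{equation*}
\SLEE(H)=\sum_{i=1}^{s}e^{q_{i}}=\sum_{k=0}^{\infty}\tfrac{1}{k!}\sum_{i=1}^{s}q_{i}^{k}=\sum_{k=0}^{\infty}\tfrac{\Tr_{k}(\mathcal{Q}_{H})}{k!},
\end{equation*}
where $\Tr_{0}(\mathcal{Q}_{H})=s=n(r-1)^{n-1}$.

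The next step is to split the series as a partial sum up to $k=r+1$ plus a tail $\sum_{k\ge r+2}\Tr_{k}(\mathcal{Q}_{H})/k!$, and to identify the partial sum with the RHS of the theorem. Substituting the trace values from Lemma~\ref{le:3-1} for $k=1,\ldots,r$ and from Theorem~\ref{th:3-1} for $k=r+1$, and using $r^{r-1}(r-1)^{n-r}=(r-1)^{n-1}(\tfrac{r}{r-1})^{r-1}$ to rewrite the edge contribution, one recovers exactly the five summands on the RHS. This is routine bookkeeping.

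The crucial step is showing that the tail is nonnegative, and this is the only real conceptual content. Here I would apply Lemma~\ref{le:3-2}: since $\mathcal{Q}_{H}=\mathcal{D}_{H}+\mathcal{A}_{H}$ has only \emph{nonnegative} entries, $\pi_{F}(\mathcal{Q}_{H})\ge 0$ for every $F\in\mathcal{F}'_{k}$, while the factors $b(F)/c(F)$ and $|W(F)|$ are automatically nonnegative. Consequently $\Tr_{k}(\mathcal{Q}_{H})\ge 0$ for every $k\ge 0$, and termwise nonnegativity of the tail follows. This is precisely where the signless Laplacian behaves better than the Laplacian, whose negative off-diagonal entries would spoil the sign control (cf.\ the remark after Lemma~\ref{le:2-6}); that is also the reason the analogous statement for $\LEE$ is more delicate.

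For the equality clause, if $H$ is empty then $\mathcal{Q}_{H}=0$, every eigenvalue is $0$, and both sides equal $s$. Conversely, if some $d_{i}>0$, I would exhibit one specific $F\in\mathcal{F}'_{2r}$ that already makes the tail positive: take $F=(ii\cdots i,\ldots,ii\cdots i)$ consisting of $2r$ copies of the diagonal index, which is $r$-valent because the outdegree $2r(r-1)$ at $i$ is divisible by $r$, and for which $b(F)/c(F)=1$, $|W(F)|=1$, and $\pi_{F}(\mathcal{Q}_{H})=d_{i}^{2r}>0$. Since $2r\ge r+2$, this forces $\sum_{k\ge r+2}\Tr_{k}(\mathcal{Q}_{H})/k!>0$, so equality fails. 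The main obstacle in writing the argument cleanly is matching the various factors of $(r-1)$ in the $k=r$ and $k=r+1$ trace formulas with the exact form of the RHS; once this indexing is handled, everything rests on the single sign observation for $\mathcal{Q}_{H}$.
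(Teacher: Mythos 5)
Your proposal is correct and follows essentially the same route as the paper: expand $\SLEE(H)=\sum_{k\ge 0}\Tr_k(\mathcal{Q}_H)/k!$, truncate at $k=r+1$ using Lemma~\ref{le:3-1} and Theorem~\ref{th:3-1}, and justify dropping the tail by the nonnegativity of $\Tr_k(\mathcal{Q}_H)$ coming from Lemma~\ref{le:3-2} and the nonnegative entries of $\mathcal{Q}_H$. Your treatment of the equality case is slightly more explicit than the paper's (which argues via all eigenvalues vanishing and $\Tr_1(\mathcal{Q}_H)=0$), but your exhibited diagonal term in $\Tr_{2r}(\mathcal{Q}_H)$ is a valid, and arguably more self-contained, way to reach the same conclusion.
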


\begin{proof}  By means of a power-series expansion, we have
\begin{align*}
\SLEE(H)&=\sum_{i=1}^{s}\sum_{k\geq 0}\tfrac{1}{k!}q_{i}^{k}=\sum_{k\geq 0}\tfrac{1}{k!}\sum\limits_{i=1}^{s}q_{i}^{k}=\sum_{k\geq 0}\tfrac{1}{k!}\Tr_{k}(\mathcal{Q}_{H}).
\end{align*}

By the trace formula of tensors (Lemma \ref{le:3-2}), we know that $\Tr_{k}(\mathcal{Q}_{H})$ is a nonnegative real number, hence

$$\SLEE(H) \geq \sum_{k= 0}^{r+1}\tfrac{1}{k!}\Tr_{k}(\mathcal{Q}_{H}).$$

By Lemma \ref{le:3-1} and Theorem \ref{th:3-1}, the claimed inequality now follows.
Equality holds if and only if $\Tr_{k}(\mathcal{Q}_{H})=0$ for $k\geq r+2$, which is the case if and only if all eigenvalues are zero. %It implies $q_{1}=q_{2}=\cdots=q_{s}=0.$
Thus, by $\Tr_{1}(\mathcal{Q}_{H})=(r-1)^{n-1}rm=0,$ we know $m=0$, i.e., $H$ is an empty hypergraph.
\end{proof}

For odd-colorable $r$-uniform hypergraphs, we know that $\Spec(\mathcal{L}_{H})=\Spec(\mathcal{Q}_{H})$ and hence, by Theorem \ref{th:3-1}, that $\Tr_{r+1}(\mathcal{A}_{H})=0.$ The latter also follows directly from the fact that $\mathcal{A}_{H}$ has spectrum that is symmetric about the origin \cite{N}. Thus we have the following.

\noindent\begin{corollary}\label{co:4-2}
Let $r$ be even and let $H$ be an $r$-uniform odd-colorable hypergraph with degrees $d_{1}, d_{2} ,\ldots, d_{n}$ and $m$ edges. Then
\begin{align*}
\SLEE(H) \geq (r-1)^{n-1}&\Big(n+(\tfrac{r}{r-1})^{r-1}\tfrac{m}{r!}
+\tfrac{(r+1)r^{r-2}}{(r-1)^{r-1}(r+1)!}\sum\limits_{i=1}^{n}d_{i}^{2}+\sum\limits_{k=1}^{r+1}\tfrac{1}{k!}\sum\limits_{i=1}^{n}d_{i}^{k}\Big).
\end{align*}
Equality is attained if and only if $H$ is an empty hypergraph.
\end{corollary}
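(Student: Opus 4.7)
The plan is to derive the corollary directly from Theorem \ref{th:4-1} by showing that, under the extra hypothesis of odd-colorability with $r$ even, the term involving $\Tr_{r+1}(\mathcal{A}_{H})$ in that lower bound vanishes. First I would invoke Lemma \ref{le:2-5}: since $r$ is even and $H$ is odd-colorable, $\Spec(\mathcal{L}_{H}) = \Spec(\mathcal{Q}_{H})$, and consequently $\Tr_{r+1}(\mathcal{L}_{H}) = \Tr_{r+1}(\mathcal{Q}_{H})$, because the order-$(r+1)$ trace is just the sum of $(r+1)$-th powers of the eigenvalues (counted with algebraic multiplicity).

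Next I would substitute the two expressions from Theorem \ref{th:3-1}. Because $r$ is even, $(-1)^{r+1} = -1$ and $(-1)^{r} = 1$, so the two formulas for $\Tr_{r+1}(\mathcal{L}_{H})$ and $\Tr_{r+1}(\mathcal{Q}_{H})$ agree in their $\sum d_{i}^{r+1}$ and $\sum d_{i}^{2}$ contributions and differ only in the sign of the $\Tr_{r+1}(\mathcal{A}_{H})$ term. Equating them yields $2\Tr_{r+1}(\mathcal{A}_{H}) = 0$, and hence $\Tr_{r+1}(\mathcal{A}_{H}) = 0$. (As noted in the paragraph preceding the corollary, this also follows directly from the fact that the spectrum of $\mathcal{A}_{H}$ is symmetric about the origin when $H$ is odd-colorable.) Substituting $\Tr_{r+1}(\mathcal{A}_{H}) = 0$ into the inequality of Theorem \ref{th:4-1} immediately produces the claimed lower bound.

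For the equality condition I would simply inherit it from Theorem \ref{th:4-1}: the bound there is tight if and only if $\Tr_{k}(\mathcal{Q}_{H}) = 0$ for every $k \geq r+2$, which by $\Tr_{1}(\mathcal{Q}_{H}) = (r-1)^{n-1} r m$ forces $m = 0$, i.e., $H$ is empty; conversely, if $H$ is empty then both sides reduce to $(r-1)^{n-1} n$. I do not foresee any real obstacle, since the substantive work is already carried out in Theorems \ref{th:3-1} and \ref{th:4-1}; the only point that requires care is the sign bookkeeping when verifying that the $\mathcal{A}_{H}$-term is indeed the unique source of the difference between $\Tr_{r+1}(\mathcal{L}_{H})$ and $\Tr_{r+1}(\mathcal{Q}_{H})$.
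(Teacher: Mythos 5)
Your proposal is correct and follows essentially the same route as the paper: the authors likewise use Lemma \ref{le:2-5} to get $\Spec(\mathcal{L}_{H})=\Spec(\mathcal{Q}_{H})$, deduce from Theorem \ref{th:3-1} (with $r$ even) that $\Tr_{r+1}(\mathcal{A}_{H})=0$, note the alternative argument via the symmetry of the adjacency spectrum, and then specialize Theorem \ref{th:4-1}. The equality discussion is also inherited exactly as you describe.
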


The generalization of the Laplacian Estrada index $\LEE_{1}(G)$ (as introduced by \cite{FAG} for graphs) to hypergraphs could be defined as $$\LEE_{1}(H)= \sum_{i=1}^{s}e^{\mu_{i}}.$$ By Lemma \ref{le:2-5}, note that $\SLEE(H)=\LEE_{1}(H)$ for an $r$-uniform odd-colorable hypergraph $H$. Therefore, the inequality in Corollary \ref{co:4-2} also holds for the Laplacian Estrada index $\LEE_{1}(H)$ for odd-colorable hypergraphs.

We next turn to (usual) graphs and consider the (other) Laplacian Estrada index $\LEE(G)$. Later, we will also generalize this index to hypergraphs, but we first wish to apply a similar technique as the above to graphs, by applying Lemma \ref{le:2-6}. As this relies on the eigenvalues being real, we cannot fully generalize this to hypergraphs.

\noindent\begin{theorem}\label{th:4-3}
Let $G$ be a graph with degrees $d_{1}, d_{2} ,\ldots, d_{n}$, $m$ edges, and $\eta$ triangles. Then
$$\LEE(G) \geq n+\tfrac{1}{6}\sum_{i=1}^{n}d_{i}^{3}+(1-\tfrac{m}{n})\sum_{i=1}^{n}d_{i}^{2}+m+\tfrac{8m^{3}}{3n^{2}}-\tfrac{4m^{2}}{n}-\eta.$$
\end{theorem}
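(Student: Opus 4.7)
The plan is to mirror the technique used for Theorem \ref{th:4-1}, but now exploiting the fact that for a graph all Laplacian eigenvalues are real, so the shifted quantities $\mu_i - \frac{2m}{n}$ are real numbers (possibly negative). This means I can apply Lemma \ref{le:2-6} with $r=3$ (odd), which yields the one-sided bound $e^x \geq 1 + x + \tfrac{x^2}{2} + \tfrac{x^3}{6}$ valid for \emph{every} $x\in\mathbb{R}$. Setting $x = \mu_i - \tfrac{2m}{n}$ and summing over $i$ gives
\begin{equation*}
\LEE(G) \;\geq\; n + S_1 + \tfrac{1}{2}S_2 + \tfrac{1}{6}S_3, \qquad S_k := \sum_{i=1}^{n}\bigl(\mu_i-\tfrac{2m}{n}\bigr)^{k}.
\end{equation*}

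Next I would compute $S_1, S_2, S_3$ from the power sums of the Laplacian spectrum via traces of $L, L^2, L^3$. Using $\sum \mu_i = \tr L = 2m$, $\sum \mu_i^2 = \tr L^2 = \sum_i d_i^2 + 2m$, and expanding $L^3=(D-A)^3$ with the cyclic property and the identities $\tr(D^jA)=0$, $\tr(DA^2)=\tr(ADA)=\tr(A^2D)=\sum_i d_i^2$ and $\tr(A^3)=6\eta$, I get
\begin{equation*}
\sum_{i=1}^{n}\mu_i^{3} \;=\; \sum_{i=1}^{n} d_i^{3} + 3\sum_{i=1}^{n} d_i^{2} - 6\eta.
\end{equation*}
Plugging these into the binomial expansions $S_1=0$, $S_2 = \sum d_i^2 + 2m - \tfrac{4m^2}{n}$, and $S_3 = \sum d_i^3 + 3\sum d_i^2 - 6\eta - \tfrac{6m}{n}\sum d_i^2 - \tfrac{12m^2}{n} + \tfrac{16m^3}{n^2}$ is the core arithmetic step.

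Collecting everything, the $\tfrac{1}{2}\sum d_i^2$ from $\tfrac{1}{2}S_2$ combines with the $\tfrac{1}{2}\sum d_i^2$ from the $\tfrac{1}{6}S_3$ term to give $\sum d_i^2$, which, together with the $-\tfrac{m}{n}\sum d_i^2$ contribution, produces the $(1-\tfrac{m}{n})\sum d_i^2$ factor in the claimed bound; the $m$-only terms simplify to $m - \tfrac{4m^2}{n} + \tfrac{8m^3}{3n^2}$, and the $\eta$-contribution is $-\eta$. This yields exactly the inequality in the statement.

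The only genuinely delicate point is the trace computation for $L^3$: one must be careful that the cross terms $\tr(D^2A)$ all vanish (since $A$ has zero diagonal) while the three cyclic permutations of $DA^2$ each equal $\sum_i d_i^2$, not $\sum_i d_i$. Apart from that, the argument is an honest power-series computation; the bookkeeping in the final algebraic simplification is the main thing to watch. Note that Lemma \ref{le:2-6} is used with $r=3$ specifically because negative values $\mu_i - \tfrac{2m}{n}<0$ do occur (small eigenvalues, e.g. $\mu_n=0$), so truncating the series at an even order would not give a valid lower bound — this is precisely the pitfall highlighted in the remark following Lemma \ref{le:2-6}.
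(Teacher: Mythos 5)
Your proposal is correct and follows essentially the same route as the paper: apply Lemma \ref{le:2-6} with the odd truncation order $3$ to the real shifted eigenvalues $\mu_i-\tfrac{2m}{n}$, then evaluate $S_1,S_2,S_3$ via the first three Laplacian power sums. The only cosmetic difference is that you compute $\sum_i\mu_i^3=\tr(L^3)$ by expanding $(D-A)^3$ directly, whereas the paper invokes its tensor trace formulas (Lemma \ref{le:3-1} and Theorem \ref{th:3-1}) specialized to $r=2$; the resulting values, and the final bookkeeping, agree.
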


\begin{proof}  Since $\sum\limits_{i=1}^{n}\mu_{i}=2m$ and $\Tr_3(\mathcal{A}_G)=6\eta$, by Lemmas \ref{le:2-6}, \ref{le:3-1}, and Theorem \ref{th:3-1}, we have that
\begin{align*}
\LEE(G)&=\sum_{i=1}^{n}e^{\mu_{i}-\frac{2m}{n}}
\geq\sum_{k=0}^{3}\tfrac{1}{k!}\sum\limits_{i=1}^{n}(\mu_{i}-\tfrac{2m}{n})^{k}\\
&=n+\tfrac{1}{2}\sum\limits_{i=1}^{n}(\mu_{i}-\tfrac{2m}{n})^{2}+\tfrac{1}{6}\sum\limits_{i=1}^{n}(\mu_{i}-\tfrac{2m}{n})^{3}\\
&=n+\tfrac{2m^2}{n}-\tfrac{4m^3}{3n^2}+(\tfrac{2m^2}{n^2}-\tfrac{2m}{n})\Tr_1(\mathcal{L}_G)+(\tfrac{1}{2}-\tfrac{m}{n})\Tr_2(\mathcal{L}_G)+\tfrac{1}{6}\Tr_3(\mathcal{L}_G)\\
&=n+\tfrac{1}{6}\sum_{i=1}^{n}d_{i}^{3}+(1-\tfrac{m}{n})\sum_{i=1}^{n}d_{i}^{2}+m+\tfrac{8m^{3}}{3n^{2}}-\tfrac{4m^{2}}{n}-\eta.
\end{align*}
\end{proof}

\noindent{\bf Remark.} Only when $n \geq m$, we seem to be able to derive the claimed --- yet unproven\footnote{see the remark after Lemma \ref{le:2-6}} --- bound $\sqrt{n^2+4m}$ of \cite{LSC}. Indeed, if we assume for simplicity that $G$ is connected and $n\geq m\geq 3$, then $\eta\leq 1.$ By Lemmas \ref{le:2-1}, \ref{le:2-3} and Theorem \ref{th:4-3}, we then have that
\begin{align*}
\LEE(G)& \geq n+\tfrac{1}{6}\sum_{i=1}^{n}d_{i}^{3}+(1-\tfrac{m}{n})\sum_{i=1}^{n}d_{i}^{2}+m+\tfrac{8m^{3}}{3n^{2}}-\tfrac{4m^{2}}{n}-1\cr
&\geq n+\tfrac{4m^{3}}{3n^{2}}+(1-\tfrac{m}{n})\tfrac{4m^{2}}{n}+m+\tfrac{8m^{3}}{3n^{2}}-\tfrac{4m^{2}}{n}-1\cr
&=n+m-1> \sqrt{n^{2}+4m}.
\end{align*}

As mentioned before, the Laplacian eigenvalues of uniform hypergraphs could be nonreal. Since Lemma \ref{le:2-6} and a lot of similar inequalities hold only for real numbers, we may need a new technique to obtain a good lower bound on the Laplacian Estrada index of hypergraphs.

\section{Upper bounds on the Laplacian Estrada index of uniform hypergraphs}
\label{sec:ch-inco}
In this section,  we define the Laplacian Estrada index of uniform hypergraphs, which generalizes the Laplacian Estrada index of graphs from \cite{LSC}. We obtain some upper bounds on the Laplacian Estrada index $\LEE(H)$ and some inequalities between $\LEE(H)$ and the Laplacian energy $\LE(H)$ of an $r$-uniform hypergraph $H.$ These generalize and improve some known results for graphs.

Let $H$ be an $r$-uniform hypergraph with $n$ vertices and $m$ edges. The Laplacian Estrada index of $H$ is defined as  $$\LEE(H)=\sum_{i=1}^{s}e^{\mu_{i}-\frac{rm}{n}}.$$
By means of a power-series expansion, we have $$\LEE(H)=\sum_{i=1}^{s}\sum_{k\geq 0}\tfrac{1}{k!}(\mu_{i}-\tfrac{rm}{n})^{k}=\sum_{k\geq 0}\tfrac{1}{k!}\sum_{i=1}^{s}(\mu_{i}-\tfrac{rm}{n})^{k}=\sum_{k\geq 0}\tfrac{1}{k!}M_{k},$$
where $M_{k}=\sum\limits_{i=1}^{s}(\mu_{i}-\tfrac{rm}{n})^{k}.$

Note that $M_k=\Tr_k(\mathcal{L}_H-\tfrac{rm}{n}\mathcal{I})$, where $\mathcal{I}$ is the identity tensor, so by the trace formula of Lemma \ref{le:3-2}, we know that $M_k$ is a real number. From Lemma \ref{le:3-1}, it follows that $M_1=\sum\limits_{i=1}^{s}(\mu_{i}-\tfrac{rm}{n})=0$.

\noindent\begin{theorem}\label{th:5-1}
Let $H$ be an $r$-uniform hypergraph with $n$ vertices and $m$ edges. Then
$$\LEE(H) \leq s-1-\sqrt{\Tr_{2}(\mathcal{L}_{H})-(r-1)^{n-1}\tfrac{r^{2}m^{2}}{n}}+e^{\sqrt{s\rho(\mathcal{L}_{H})^{2}-(r-1)^{n-1}\frac{r^{2}m^{2}}{n}}},$$
where $\rho(\mathcal{L}_{H})$ is the Laplacian spectral radius of $H$. Equality is attained if and only if $H$ is an empty hypergraph.
\end{theorem}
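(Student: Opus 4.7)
\textbf{Proof plan for Theorem~\ref{th:5-1}.}
The idea is to expand $\LEE(H)=\sum_{i=1}^{s}e^{\mu_{i}-rm/n}$ via the Taylor series of $e^{x}$ and reduce the problem to controlling the shifted moments $M_{k}=\sum_{i=1}^{s}(\mu_{i}-rm/n)^{k}$ already introduced in the excerpt. By Lemma~\ref{le:3-1}, $\sum_{i}\mu_{i}=\Tr_{1}(\mathcal{L}_{H})=(r-1)^{n-1}rm$, so $M_{1}=0$ and $\LEE(H)=s+\sum_{k\ge 2}M_{k}/k!$. Since $\mathcal{L}_{H}$ is a real tensor, its non-real eigenvalues appear in complex conjugate pairs; hence each $M_{k}$ (and thus $\LEE(H)-s$) is a real number, and I may write $\LEE(H)-s\le\sum_{k\ge 2}|M_{k}|/k!$.

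Setting $\nu_{i}=\mu_{i}-rm/n$ and $T=\sum_{i}|\nu_{i}|^{2}$, the triangle inequality gives $|M_{k}|\le\sum_{i}|\nu_{i}|^{k}$, and an elementary inequality yields $\sum_{i}|\nu_{i}|^{k}\le T^{k/2}$ for every integer $k\ge 2$: for even $k=2j$ this follows just by expanding $(\sum_{i}|\nu_{i}|^{2})^{j}$ and discarding cross terms, and for odd $k$ it follows by Cauchy--Schwarz reduction to the even case. Summing the resulting geometric-type tail produces the clean intermediate inequality
\[
\LEE(H)\le s+\sum_{k\ge 2}\frac{T^{k/2}}{k!}=s-1-\sqrt{T}+e^{\sqrt{T}}.
\]

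To recast this in the form of the theorem, I sandwich $T$ between $M_{2}$ and $U:=s\rho(\mathcal{L}_{H})^{2}-(r-1)^{n-1}r^{2}m^{2}/n$. A direct expansion combined with $\sum_{i}\mu_{i}=(r-1)^{n-1}rm$ gives $T=\sum_{i}|\mu_{i}|^{2}-(r-1)^{n-1}r^{2}m^{2}/n$, so $|\mu_{i}|\le\rho(\mathcal{L}_{H})$ yields $T\le U$. On the other side, since $M_{2}$ is real and nonnegative (by Cauchy--Schwarz on the degree sequence, using Lemma~\ref{le:3-1} to identify $M_{2}=\Tr_{2}(\mathcal{L}_{H})-(r-1)^{n-1}r^{2}m^{2}/n$) and $|M_{2}|\le\sum_{i}|\nu_{i}^{2}|=T$, I get $0\le M_{2}\le T\le U$. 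Adding the two weakenings $-\sqrt{T}\le-\sqrt{M_{2}}$ and $e^{\sqrt{T}}\le e^{\sqrt{U}}$ to the intermediate inequality delivers the stated bound $\LEE(H)\le s-1-\sqrt{M_{2}}+e^{\sqrt{U}}$. For equality, the empty hypergraph gives $\mathcal{L}_{H}=0$ and both sides equal $s$; conversely, equality in the tail estimate forces $\sum_{i}|\nu_{i}|^{k}=T^{k/2}$ for every $k\ge 3$, hence at most one $\nu_{i}$ is nonzero, which together with $\sum_{i}\nu_{i}=0$ forces all $\nu_{i}=0$, and so $\mathcal{A}_{H}$ has spectral radius $0$ and $m=0$.

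The main obstacle is that the Laplacian eigenvalues of a uniform hypergraph may be complex, so the classical graph-theoretic proof of this kind of bound, which uses monotonicity of $e^{x}$ on $\mathbb{R}$ and the nonnegativity of even moments, does not transfer directly. The substitute ingredients are the conjugate-pair reality of $\LEE(H)$, the triangle inequality $|M_{k}|\le\sum_{i}|\nu_{i}|^{k}$, and the sandwich $M_{2}\le T\le U$; the asymmetric appearance of $M_{2}$ and $U$ in the theorem is exactly the artefact of these being the two-sided estimates for the same quantity $T=\sum_{i}|\nu_{i}|^{2}$ that naturally governs the tail.
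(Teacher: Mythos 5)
Your proof is correct and follows essentially the same route as the paper: your $T$ is the paper's $A_2=\sum_i\lvert\mu_i-\tfrac{rm}{n}\rvert^2$, your sandwich $M_2\le T\le U$ is exactly the paper's two estimates, and the tail summation $\sum_{k\ge2}T^{k/2}/k!=e^{\sqrt T}-1-\sqrt T$ with the equality analysis matches the paper's argument. The only cosmetic difference is that the paper writes $\mu_i=a_i+b_i\mathbf{i}$ and compares $\Tr_2(\mathcal{L}_H)=\sum(a_i^2-b_i^2)$ with $\sum(a_i^2+b_i^2)$, whereas you phrase the same fact as $M_2=\lvert M_2\rvert\le T$.
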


\begin{proof} Let $\mu_{i}=a_{i}+b_{i}\mathbf{i}$ be the $i$-th eigenvalue of $\mathcal{L}_{H},$ with $a_{i}, b_{i}\in \mathbb{R}$ and $\mathbf{i}^{2}=-1.$ By Lemma \ref{le:3-1}, we have
$$\Tr_1(\mathcal{L}_{H})=\sum_{i=1}^{s}\mu_{i}=\sum_{i=1}^{s}a_{i}=(r-1)^{n-1}rm.$$
Moreover, 
\begin{align}\label{eq:5-1}
\Tr_2(\mathcal{L}_{H})=\sum_{i=1}^{s}\mu_{i}^{2}=\sum_{i=1}^{s}(a_{i}^{2}-b_{i}^{2}) \leq \sum_{i=1}^{s}a_{i}^{2},
\end{align}
with equality holding for $r=2$.

Next, we define $A_2=\sum_{i=1}^{s}\lvert \mu_{i}-\tfrac{rm}{n}\rvert^{2}$. We then have that
\begin{align}\label{eq:5-2}
A_2=\sum_{i=1}^{s}\left((a_{i}-\tfrac{rm}{n})^{2} +b_{i}^{2}\right)
%\cr&=\sum_{i=1}^{s}(a_{i}^{2}-2a_{i}\tfrac{rm}{n}+(\tfrac{rm}{n})^{2}+b_{i}^{2})\cr
= \sum_{i=1}^{s}(a_{i}^{2}+b_{i}^{2})-(r-1)^{n-1}\tfrac{r^{2}m^{2}}{n}
\leq s\rho(\mathcal{L}_{H})^{2}-(r-1)^{n-1}\tfrac{r^{2}m^{2}}{n}.
\end{align}

Using among others \eqref{eq:5-1} and \eqref{eq:5-2}, it now follows that
\begin{align*}
\LEE(H)&=s+\sum\limits_{k\geq 2}\tfrac{1}{k!}\sum_{i=1}^{s}(\mu_{i}-\tfrac{rm}{n})^{k}\leq s+\sum\limits_{k\geq 2}\tfrac{1}{k!}\sum_{i=1}^{s}\lvert \mu_{i}-\tfrac{rm}{n}\rvert^{k}\\
&= s+\sum\limits_{k\geq 2}\tfrac{1}{k!}\sum_{i=1}^{s}\left(\lvert \mu_{i}-\tfrac{rm}{n}\rvert^{2}\right)^{\tfrac{k}{2}}\leq s+\sum\limits_{k\geq 2}\tfrac{1}{k!}(A_2)^{\frac{k}{2}}\\%\left(\sum_{i=1}^{s}\lvert \mu_{i}-\tfrac{rm}{n}\rvert^{2}\right)^{\frac{k}{2}}\\
&\leq s-1-\sqrt{\sum_{i=1}^{s}(a_{i}^{2}+b_{i}^{2})-(r-1)^{n-1}\tfrac{r^{2}m^{2}}{n}}
+e^{\sqrt{s\rho(\mathcal{L}_{H})^{2}-(r-1)^{n-1}\frac{r^{2}m^{2}}{n}}}\\&
\leq s-1-\sqrt{\Tr_{2}(\mathcal{L}_{H})-(r-1)^{n-1}\tfrac{r^{2}m^{2}}{n}}+e^{\sqrt{s\rho(\mathcal{L}_{H})^{2}-(r-1)^{n-1}\frac{r^{2}m^{2}}{n}}}.
\end{align*}

If equality is attained, then it follows from the first inequality that $\mu_{i}-\tfrac{rm}{n}$ is a nonnegative real number for all $i$. Because $0$ is a Laplacian eigenvalue of $H,$ it follows that $m=0$, and hence $H$ is an empty hypergraph.
\end{proof}

Recall that for $r \geq 3$, we have that $\Tr_{2}(\mathcal{L}_{H})=(r-1)^{n-1}M$.

For $r=2$ and a graph $G$, $\Tr_{2}(\mathcal{L}_{G})=2m+M$. Moreover, the eigenvalues of $\mathcal{L}_{G}$ are nonnegative real numbers and $\sum\limits_{i=1}^{n}a_{i}^{2} =\Tr_{2}(\mathcal{L}_{G}).$ Combining this with the proof of Theorem \ref{th:5-1}, we have $$\LEE(G) \leq n-1-\sqrt{\Tr_{2}(\mathcal{L}_{G})-\tfrac{4m^{2}}{n}}+e^{\sqrt{\Tr_{2}(\mathcal{L}_{G})-\frac{4m^{2}}{n}}}.$$ By Lemma \ref{le:2-2}, we know this improves the upper bound of \cite[Thm.~5]{LSC}.

By Lemmas \ref{le:2-1}, \ref{le:2-2}, and Theorem \ref{th:5-1}, it is straightforward to obtain the following theorem.
\noindent\begin{theorem}\label{th:5-2}
Let $r \geq 3$. Let $H$ be an $r$-uniform hypergraph with $n$ vertices, $m$ edges, maximal degree $d_{1}$, and minimal degree $d_{n}$. Then
$$\LEE(H) \leq s-1-\sqrt{(r-1)^{n-1}(\theta-\tfrac{r^{2}m^{2}}{n})}+e^{\sqrt{s\rho(\mathcal{L}_{H})^{2}-(r-1)^{n-1}\frac{r^{2}m^{2}}{n}}},$$
where $\theta=\min\{rm(d_{1}+d_{n})-nd_{1}d_{n}, r^{2}m^{2}-n(n-1)d_{n}^{2}\}$ and $\rho(\mathcal{L}_{H})$ is the Laplacian spectral radius of $H$. Equality is attained if and only if $H$ is an empty hypergraph.
\end{theorem}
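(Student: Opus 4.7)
The plan is to derive Theorem \ref{th:5-2} as a direct corollary of Theorem \ref{th:5-1} by replacing the spectral quantity $\Tr_{2}(\mathcal{L}_{H})$ in the subtractive term by an explicit upper bound in terms of $n$, $m$, $d_{1}$, and $d_{n}$. As the paper suggests, the proof is essentially three bookkeeping substitutions.

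First I would invoke the identity $\Tr_{2}(\mathcal{L}_{H}) = (r-1)^{n-1}M$, valid for $r \geq 3$ and recorded in the paper just before the statement (it is also Lemma \ref{le:3-1} specialised to $j=2$). Then the quantity under the first square root in the bound of Theorem \ref{th:5-1} becomes $(r-1)^{n-1}\bigl(M - \tfrac{r^{2}m^{2}}{n}\bigr)$, which is nonnegative by the lower bound in Lemma \ref{le:2-1}.

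Next, I would combine Lemmas \ref{le:2-1} and \ref{le:2-2} simultaneously: the former yields $M \leq r^{2}m^{2} - n(n-1)d_{n}^{2}$ and the latter yields $M \leq (d_{1}+d_{n})rm - nd_{1}d_{n}$. Taking the minimum of the two right-hand sides gives $M \leq \theta$, and hence $(r-1)^{n-1}\bigl(M - \tfrac{r^{2}m^{2}}{n}\bigr) \leq (r-1)^{n-1}\bigl(\theta - \tfrac{r^{2}m^{2}}{n}\bigr)$. Substituting this into the bound of Theorem \ref{th:5-1} while leaving the exponential term involving $\rho(\mathcal{L}_{H})$ untouched (there is no analogous clean degree-based bound on $\rho$ that fits cleanly inside $e^{\sqrt{\cdot}}$) produces the claimed inequality. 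The equality case transfers verbatim from Theorem \ref{th:5-1}: if $H$ is empty then $M = \theta = 0$ and $\rho(\mathcal{L}_{H}) = 0$, so both sides collapse to $s$.

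The main care point, and the only nontrivial check, is the direction of monotonicity at the substitution step. The map $x \mapsto -\sqrt{x - c}$ is decreasing, so replacing $\Tr_{2}(\mathcal{L}_{H})$ by its larger upper bound $(r-1)^{n-1}\theta$ under the subtracted square root has the effect of making the subtractive term more negative. One therefore has to make sure the resulting expression is still a valid upper bound on $\LEE(H)$ — in particular, that the substitution is invoked in the correct direction inside the chain from Theorem \ref{th:5-1}, rather than weakening the inequality. Apart from this monotonicity bookkeeping, everything is a direct application of the two lemmas and Theorem \ref{th:5-1}.
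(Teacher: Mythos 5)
Your reduction to Theorem \ref{th:5-1} mirrors the paper exactly (its entire justification is the single sentence ``By Lemmas \ref{le:2-1}, \ref{le:2-2}, and Theorem \ref{th:5-1}, it is straightforward to obtain the following theorem''), and you assemble the right ingredients: $\Tr_{2}(\mathcal{L}_{H})=(r-1)^{n-1}M$ for $r\geq 3$, and $M\leq\theta$ from the two lemmas. However, the ``monotonicity bookkeeping'' that you flag and then set aside is precisely where the argument breaks, and you do not resolve it. Since $M\leq\theta$, one has
\[
-\sqrt{(r-1)^{n-1}\bigl(M-\tfrac{r^{2}m^{2}}{n}\bigr)}\;\geq\;-\sqrt{(r-1)^{n-1}\bigl(\theta-\tfrac{r^{2}m^{2}}{n}\bigr)},
\]
so the right-hand side of Theorem \ref{th:5-2} is \emph{smaller} than the right-hand side of Theorem \ref{th:5-1}: replacing a quantity that sits under a subtracted square root by an upper bound for it strengthens the claimed inequality rather than weakening it, and therefore cannot be justified by substitution. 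Bounding $-\sqrt{X}$ from above requires a \emph{lower} bound on $X$, while Lemmas \ref{le:2-1} and \ref{le:2-2} only supply upper bounds on $M$ (the sole lower bound available, $M\geq r^{2}m^{2}/n$, yields the useless estimate $-\sqrt{X}\leq 0$). Checking that both sides equal $s$ for the empty hypergraph does not address this, since it says nothing about nonempty $H$. This gap is present in the paper's one-line proof as well; you deserve credit for spotting it, but naming a problem is not the same as closing it.

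To actually obtain the stated bound an extra argument is needed. For example, from the proof of Theorem \ref{th:5-1} one has $\LEE(H)\leq s-1-\sqrt{A_2}+e^{\sqrt{A_2}}$ with $A_2\leq Q:=s\rho(\mathcal{L}_{H})^{2}-(r-1)^{n-1}\tfrac{r^{2}m^{2}}{n}$; since $x\mapsto e^{\sqrt{x}}-\sqrt{x}$ is increasing on $[0,\infty)$, this gives $\LEE(H)\leq s-1-\sqrt{Q}+e^{\sqrt{Q}}$, and the claimed inequality would then follow from $(r-1)^{n-1}\bigl(\theta-\tfrac{r^{2}m^{2}}{n}\bigr)\leq Q$, i.e.\ from $\theta\leq n\rho(\mathcal{L}_{H})^{2}$. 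Using $\theta\leq rm(d_{1}+d_{n})-nd_{1}d_{n}\leq nd_{1}(d_{1}+d_{n})-nd_{1}d_{n}=nd_{1}^{2}$, this reduces to $\rho(\mathcal{L}_{H})\geq d_{1}$, a fact that would have to be proved or cited. Without some such supplementary step, both your proposal and the paper's proof of Theorem \ref{th:5-2} are incomplete.
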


We note that Theorem \ref{th:5-1} essentially improves also the bound of \cite[Eq.~17]{LSC}, which involves also the Laplacian energy of a graph.

Recall that the Laplacian energy of an $r$-uniform hypergraph $H$ with $n$ vertices and $m$ edges is defined as $$\LE(H)=\sum_{i=1}^{s}\lvert \mu_{i}-\tfrac{rm}{n}\rvert.$$

Our final result is a generalization and refinement of the upper bound of \cite[Eq.~18]{LSC}.

\noindent\begin{theorem}\label{th:5-4}
Let $H$ be an $r$-uniform hypergraph with $n$ vertices and $m$ edges. Then
$$\LEE(H) \leq s+\tfrac{1}{2}M_{2}-1-\LE(H)-\tfrac{1}{2}\LE(H)^{2}+e^{\LE(H)}.$$
Equality is attained if and only if $H$ is an empty hypergraph.
\end{theorem}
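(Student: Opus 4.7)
The plan is to mimic the tactic of Theorem \ref{th:5-1}, but to bound the tail of the power series by $\LE(H)$ rather than by $A_{2}$. Starting from the expansion $\LEE(H)=\sum_{k\ge 0}\tfrac{1}{k!}M_{k}$ given just before the statement, I would isolate the first three terms: $M_{0}=s$, $M_{1}=0$ (noted before the statement), and keep $M_{2}/2$ as it appears explicitly in the claimed bound. So it remains to control $\sum_{k\ge 3}M_{k}/k!$.

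For each $k\ge 3$, since $M_{k}$ is a real number by the trace formula (Lemma \ref{le:3-2}), the triangle inequality yields
\begin{align*}
M_{k}\le |M_{k}|=\Big|\sum_{i=1}^{s}(\mu_{i}-\tfrac{rm}{n})^{k}\Big|\le \sum_{i=1}^{s}\big|\mu_{i}-\tfrac{rm}{n}\big|^{k}.
\end{align*}
With $a_{i}:=|\mu_{i}-rm/n|\ge 0$, the multinomial expansion of $(\sum_{i}a_{i})^{k}$ contains the ``diagonal'' terms $\sum_{i}a_{i}^{k}$ together with additional nonnegative summands, giving the elementary inequality $\sum_{i}a_{i}^{k}\le (\sum_{i}a_{i})^{k}=\LE(H)^{k}$. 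Summing over $k\ge 3$ yields $\sum_{k\ge 3}\tfrac{1}{k!}M_{k}\le e^{\LE(H)}-1-\LE(H)-\tfrac12\LE(H)^{2}$, which combined with the $k=0,1,2$ terms is exactly the right-hand side of the claim.

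The delicate part, and the only real obstacle, is the equality case, since Laplacian eigenvalues can be nonreal. If equality holds throughout, then in particular $\sum_{i}a_{i}^{3}=(\sum_{i}a_{i})^{3}$, which forces at most one $a_{i}=|\mu_{i}-rm/n|$ to be nonzero. On the other hand, by Lemma \ref{le:3-1} and $s=n(r-1)^{n-1}$ one has $\sum_{i=1}^{s}(\mu_{i}-\tfrac{rm}{n})=(r-1)^{n-1}rm-s\cdot\tfrac{rm}{n}=0$, so the remaining nonzero summand (if any) would have to equal zero, a contradiction. Hence $\mu_{i}=rm/n$ for every $i$; since $0$ is always a Laplacian eigenvalue of $H$ (the all-ones vector is an eigenvector of $\mathcal{L}_{H}$ with eigenvalue $0$), this forces $m=0$, i.e.\ $H$ is empty. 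The converse is immediate: for the empty hypergraph the LHS equals $s$ and the RHS collapses to $s+0-1-0-0+e^{0}=s$.
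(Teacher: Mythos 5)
Your proposal is correct and follows essentially the same route as the paper's proof: split off the $k=0,1,2$ terms of the power series, bound $M_{k}$ by $\sum_{i}\lvert\mu_{i}-\tfrac{rm}{n}\rvert^{k}\leq\LE(H)^{k}$ for $k\geq 3$, and resum. Your treatment of the equality case (using $\sum_{i}a_{i}^{3}=(\sum_{i}a_{i})^{3}$ together with $M_{1}=0$ and the fact that $0$ is always a Laplacian eigenvalue) is in fact more careful than the paper's, which simply asserts the conclusion as clear.
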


\begin{proof}  Similar as before, we have that
\begin{align*}
\LEE(H)& = s+\tfrac{1}{2}M_{2}+\sum\limits_{k\geq 3}\tfrac{1}{k!}\sum_{i=1}^{s}(\mu_{i}-\tfrac{rm}{n})^{k}\leq s+\tfrac{1}{2}M_{2}+\sum\limits_{k\geq 3}\tfrac{1}{k!}\sum_{i=1}^{s}\lvert \mu_{i}-\tfrac{rm}{n}\rvert^{k}\\
&\leq s+\tfrac{1}{2}M_{2}+\sum\limits_{k\geq 3}\tfrac{1}{k!}\left(\sum_{i=1}^{s}\lvert \mu_{i}-\tfrac{rm}{n}\rvert\right)^{k}=s+\frac{1}{2}M_{2}+\sum\limits_{k\geq 3}\tfrac{1}{k!}\LE(H)^{k}\\&=s+\tfrac{1}{2}M_{2}-1-\LE(H)-\tfrac{1}{2}\LE(H)^{2}+e^{\LE(H)}.
\end{align*}

It is clear that equality holds if and only if $\mu_{1}=\mu_{2}=\cdots=\mu_{n}=\tfrac{rm}{n}=0$, i.e., $H$ is an empty hypergraph.
\end{proof}

Notice that $\LE(H)^{2}=\left(\sum\limits_{i=1}^{s}\lvert\mu_{i}-\tfrac{rm}{n}\rvert\right)^{2}\geq \sum\limits_{i=1}^{s}(\mu_{i}-\tfrac{rm}{n})^{2}=M_{2}$.
The bound is thus better that the more elementary bound

$$\LEE(H) \leq s-1-\LE(H)+e^{\LE(H)}.$$

For $r=2,$  this improves the upper bound of \cite[Eq.~18]{LSC}.

\end{document}